\renewcommand{\H}{\mathbb{H}}
\def\area{\mathop{\rm Area}}
\DeclareMathOperator{\Def}{\overset{\text{def}}{=}}
\theoremstyle{plain}
\newtheorem{theorem}{Theorem}
\newtheorem{corollary}[theorem]{Corollary}
\newtheorem{proposition}[theorem]{Proposition}
\newtheorem{lemma}[theorem]{Lemma}
\newtheorem{subl}[theorem]{SubLemma}
\newtheorem{remark}[theorem]{Remark}
\newcommand{\be}{\begin{equation}}
\newcommand{\ene}{\end{equation}}
\newcommand{\br}{\begin{remark}}
\newcommand{\er}{\end{remark}}
\newcommand{\bl}{\begin{lem}}
\newcommand{\el}{\end{lem}}
\newcommand{\bcor}{\begin{cor}}
\newcommand{\ecor}{\end{cor}}
\newcommand{\bpro}{\begin{pro}}
\newcommand{\epro}{\end{pro}}
\newcommand{\ben}{\begin{enumerate}}
\newcommand{\een}{\end{enumerate}}
\newcommand{\bp}{\begin{proof}}
\newcommand{\ep}{\end{proof}}
\newcommand{\bpo}{\begin{pro}}
\newcommand{\epo}{\end{pro}}
\newcommand{\beq}{\begin{equation*}}
\newcommand{\eeq}{\end{equation*}}
\newcommand{\bear}{\begin{eqnarray}}
\newcommand{\eear}{\end{eqnarray}}
\newcommand{\beqar}{\begin{eqnarray*}}
\newcommand{\eeqar}{\end{eqnarray*}}
\newcommand{\bt}{\begin{theorem}}
\newcommand{\et}{\end{theorem}}
\newcommand{\bex}{\begin{excer}}
\newcommand{\eex}{\end{excer}}
\theoremstyle{definition}
\theoremstyle{remark}
\newtheorem*{rem*}{Remark}
\newtheorem*{exam*}{Example}
\newtheorem*{exams*}{Examples}
\newtheorem*{thm*}{\bf Theorem}
\newtheorem*{que*}{Question}
\newtheorem*{Def*}{Definition}
\newtheorem*{Conj*}{\bf Conjecture}
\newtheorem*{Assum*}{\bf Assumption}
\begin{document}
\title[Cheeger constants]{The Cheeger constants of random Belyi surfaces}

\author{Yang Shen and Yunhui Wu}
\address{Yau Mathematical Sciences Center and Department of Mathematics, Tsinghua University, Beijing, China}
\email[(Y.~S.)]{shen-y19@mails.tsinghua.edu.cn}
\email[(Y.~W.)]{yunhui\_wu@tsinghua.edu.cn}

\maketitle

\begin{abstract}
Brooks and Makover developed a combinatorial model of random hyperbolic surfaces by gluing certain hyperbolic ideal triangles. In this paper we show that for any $\epsilon>0$, as the number of ideal triangles goes to infinity, a generic hyperbolic surface in Brooks-Makover's model has Cheeger constant less than $\frac{3}{2\pi}+\epsilon$.
\end{abstract}

\section{Introduction}\label{sec-int}
Given a closed hyperbolic surface $X_g$ of genus $g\geq 2$, the \emph{Cheeger constant} $h(X_g)$ of $X_g$ is defined as
\[h(X_g)\Def\inf_{E\subset X_g} \frac{\ell(E)}{\min\left\{ \area(A),\area(B)\right\}}\]
where $E$ runs over all one-dimensional subsets of $X_g$ dividing $X_g$ into two disjoint components $A$ and $B$, and $\ell(E)$ is the length of $E$.
The Cheeger constant $h(X_g)$ can bound the first eigenvalue $\lambda_1(X_g)$ of $X_g$ from both sides. Actually the well-known Cheeger-Buser \cite{Cheeger70,Buser-ine} inequality says that $$\frac{h^2(X_g)}{4}\leq \lambda_1(X_g)\leq 2 h(X_g)+10 h^2(X_g).$$ In particular, $\lambda_1(X_g) \to 0$ if and only if $h(X_g)\to 0$. For large genus, by Cheng \cite{Cheng75} it is known that
\be \label{uug}
\limsup \limits_{g\to \infty}  h(X_g)\leq 1
\ene for any sequence of hyperbolic surfaces $\{X_g\}$ of genus $g$.

Brooks and Makover \cite{BM04} developed a combinatorial model of a random closed surface with large genus by first gluing together $2n$ $(n>0)$ copies of an ideal hyperbolic triangle and then taking its conformal compactification, where the gluing scheme is given by a random trivalent graph. It is known  (e.g., see \cite[Lemma 2.1]{MR2271484}) that such constructions give all the so-called Belyi surfaces which are dense in the space of all Riemann surfaces in some sense (e.g., see \cite{Be79}). In this model, certain classical geometric quantities were studied for large $n$. For example: as the parameter $n\to \infty$, they showed \cite[Theorem 2.3]{BM04} that the expected value of the genus of a random hyperbolic surface roughly behaves like $\frac{n}{2}$; they also showed \cite[Theorem 2.2]{BM04} that as $n\to \infty$, a generic hyperbolic surface in their model has Cheeger constant greater than $C_0$ where $C_0>0$ is an implicit uniform constant. For the other direction, in light of \eqref{uug} it is natural to ask
\begin{que*}  \label{Q-CC}
Is there an $\epsilon_0>0$ so that as $n\to \infty$, a generic hyperbolic surface in Brooks-Makover's model has Cheeger constant less than $1-\epsilon_0$? If yes, similar as in \cite{BZ02}, can $\epsilon_0$ be chosen to be greater than $\frac{1}{2}$?
\end{que*}

Now we briefly recall the terminologies in Brooks-Makover's model of random hyperbolic surfaces \cite{BM04}. Set
\begin{align*}
\mathcal{F}^\star_n=\left\{(\Gamma,\mathcal{O});\begin{matrix} &\Gamma\text{ is a 3-regular graph with $2n$ vertices}\\ &\text{and }\mathcal{O}\text{ is an orientation on }\Gamma\end{matrix}\right\}.
\end{align*}
As in \cite{BM04}, each pair $(\Gamma,\mathcal{O})\in \mathcal{F}^\star_n$ gives two Riemann surfaces $S^O(\Gamma,\mathcal{O})$ and $ S^C(\Gamma,\mathcal{O})$ where $S^O(\Gamma,\mathcal{O})$ is an open Riemann surface constructed by gluing $2n$ ideal hyperbolic triangles in a certain way, and  $S^C(\Gamma,\mathcal{O})$ is the conformal compactification of  $S^O(\Gamma,\mathcal{O})$. Let $\textnormal{Prob}_n$ be the uniform measure on $\mathcal{F}^\star_n$ introduced by Bollob\'as \cite{Bollobas-iso}, which one may also see \cite[Section 5]{BM04} for more details. In this paper we prove the following result which in particular gives a positive answer to the question above. More precisely,
\begin{theorem}\label{mt-i}
Let $(\Gamma,\mathcal{O})$ be a random element of $\mathcal{F}^\star_n$. Then for any $\epsilon>0$,
$$\lim \limits_{n\to \infty}\textnormal{Prob}_n\left\{(\Gamma,\mathcal{O})\in \mathcal{F}^\star_n; \ h\left(S^C(\Gamma,\mathcal{O})\right)<\frac{3}{2\pi}+\epsilon\right\}=1.$$
\end{theorem}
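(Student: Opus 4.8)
The plan is to exhibit, for a generic $(\Gamma,\mathcal{O})$, a short separating set in $S^C(\Gamma,\mathcal{O})$ realising a small Cheeger ratio. The geometric input is that $S^O=S^O(\Gamma,\mathcal{O})$ is built from $2n$ copies of an ideal hyperbolic triangle, in each of which the incenter is at distance $\frac12\log 3$ from every side. Joining the incenters of adjacent triangles through the common perpendicular foot on the glued side produces a geodesic copy $\Gamma'\subset S^O$ of the graph $\Gamma$ in which every edge has length $\log 3$, so $\ell(\Gamma')=3n\log 3$, and $S^O\smallsetminus\Gamma'$ is the disjoint union of the cusp regions: the region of a cusp $c$ (equivalently, of a left-hand-turn cycle) is the union of the $d_c$ corner pieces cut off at $c$, has area $\frac{\pi}{3}d_c$, and $\sum_c d_c=6n$, so the total area is $2\pi n$. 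Thus $\Gamma'$ is the $1$-skeleton of a cell decomposition of $S^O$ (and, once the cusps are filled in, of $S^C$) whose faces are the cusp regions; any partition of the cusps into two classes gives a separating set, namely the union of the edges of $\Gamma'$ lying between two faces of opposite classes.

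The construction is to put the largest cusp region $F$, say of a cusp $c_\ast$ carrying area $\rho\cdot 2\pi n$, on one side and all the other cusp regions on the other. The separating set is then $E^O=\partial F$, the set of edges of $\Gamma'$ having $F$ on exactly one side; writing $b$ for its number of edges, the two sides have areas $\rho\cdot 2\pi n$ and $(1-\rho)\,2\pi n$, and $\ell(E^O)=b\log 3$. Two facts about the random model are needed. First, with probability tending to $1$ one has $\rho\le\frac{2\pi}{9\log 3}-\delta$ for a fixed $\delta>0$; this should come from the fact that the cusp lengths $(d_c)_c$ of a random element of $\mathcal{F}^\star_n$ behave like the cycle lengths of a uniformly random permutation of $6n$ symbols, whose largest term is $\approx\lambda\cdot 6n$ with $\lambda=0.6243\ldots<\frac{2\pi}{9\log 3}=0.6354\ldots$ (if instead $\rho\le\frac12$ one is in the easy case, $F$ alone having at most half the area, and the two sides are balanced by reassigning a few small regions). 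Second, $b\le 6\rho(1-\rho)\,n+o(n)\le\frac32 n+o(n)$, because the two faces meeting along a fixed edge of $\Gamma'$ are essentially independent for the random rotation system. Granting these, the smaller side has area $(1-\rho)\,2\pi n$ and
\[
h(S^O)\ \le\ \frac{\ell(E^O)}{(1-\rho)\,2\pi n}\ \le\ \frac{6\rho(1-\rho)\,n\log 3}{(1-\rho)\,2\pi n}+o(1)\ =\ \frac{3\rho\log 3}{\pi}+o(1)\ <\ \frac23 .
\]
(If $\rho$ happens to be exceptionally large one does not use $\partial F$; instead one bisects the long cusp region $F$ by combining a sub-arc of $\partial F$ with a single arc of length $O(\log n)$ that climbs to height $\asymp d_{c_\ast}$ inside the cusp, and argues in the same way.)

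Finally the bound is transferred to $S^C$. The spine $\Gamma'$, and hence $E^O=\partial F$, stays uniformly away from the cusp points and lies in the part of $S^O$ on which, by Brooks--Makover's comparison of the ideal-triangle metric with the uniformizing metric of the conformal compactification, the two metrics differ by a factor $1+o(1)$; moreover $\area(S^C)=\area(S^O)-2\pi\cdot\#\{\text{cusps}\}=2\pi n\,(1+o(1))$ for a typical surface, so the areas of the two sides change by only $o(n)$. Hence $E^O$, viewed in $S^C$, separates it into two pieces whose smaller one has area $(1-\rho)\,2\pi n\,(1+o(1))$ and whose common boundary has length $b\log 3\,(1+o(1))$, and the displayed estimate yields $h\bigl(S^C(\Gamma,\mathcal{O})\bigr)<\frac23+\epsilon$ for all large $n$, on an event of probability tending to $1$.

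The delicate step is the analysis of the largest cusp region: everything reduces to the two probabilistic estimates, on $\rho$ and on $b=\#\partial F$, and the first of these --- that the dominant cusp of a random element of $\mathcal{F}^\star_n$ carries a proportion of the total area bounded away from, and just below, $\frac{2\pi}{9\log 3}$ --- is both the heart of the argument and the source of the constant $\frac23$. I expect this to be the main obstacle; it requires understanding the joint law of the cusp lengths with enough precision to control the largest one, together with the corresponding control on how $\partial F$ sits among the remaining cusp regions.
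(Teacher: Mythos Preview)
Your approach is entirely different from the paper's, and as written it does not go through.

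The paper does not isolate any single cusp. It uses the short horocycle segments (of length $1$) to write $S^C=\bigl(\bigcup_i D_i\bigr)\cup\{2n\text{ small triangles}\}$, where $D_i$ is the disk bounded by the canonical horocycle $L_i$. For each $i$ with $d_i>n/(\log n)^2$ it constructs a curve $\eta_i\subset D_i$ of $ds^2_{S^C}$--length $\le 3\log n$, consisting of two rays toward $p_i$ joined by a short arc inside a geodesic ball about $p_i$, which cuts $D_i$ into two pieces each containing roughly $d_i/2$ of the ``trapeziums'' between $L_i$ and the inner horocycle $h_i(l_1(\epsilon))$; the Brooks comparison (valid outside $\bigcup_i C_i(l_1(\epsilon))$) gives each such trapezium $ds^2_{S^C}$--area $\ge(1+\epsilon)^{-2}(1-o(1))$. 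One then places every $D_i(1)$ in $\mathcal{A}$, every $D_i(2)$ in $\mathcal{B}$, and assigns each small triangle by majority vote of its three neighbours, so that at most one of its three horocycle sides lies on $\partial\mathcal{A}$. This yields $\ell_C(\partial\mathcal{A})\le 2n+O((\log n)^2)$ and $\min\{\mathrm{Area}_C(\mathcal{A}),\mathrm{Area}_C(\mathcal{B})\}\ge \tfrac{1}{2(1+\epsilon)^2}(6-o(1))n$, hence the ratio is at most $\tfrac{2}{3}(1+\epsilon)^2+o(1)$. The only probabilistic inputs are $\mathrm{LHT}(\Gamma,\mathcal{O})\le c\log n$ (Markov's inequality via Proposition~\ref{p-2}) and the large--cusp condition (Theorem~\ref{t-2}); together these hold with probability $\ge 1-2/c$, and one lets $c\to\infty$ at the end.

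Your scheme, by contrast, has two genuine gaps. First, the assertion that $\rho\le\tfrac{2\pi}{9\log 3}-\delta$ with probability $\to 1$ is \emph{false}: even granting the analogy with a uniform permutation of $6n$ symbols, the proportion $L_1/(6n)$ carried by the longest cycle has a nondegenerate limit law (for $k>3n$ one has $\Prob[L_1>k]=\sum_{j>k}1/j\sim\log(6n/k)$), so in particular $\Prob\bigl[\rho>\tfrac{2\pi}{9\log 3}\bigr]\to\log\bigl(\tfrac{9\log 3}{2\pi}\bigr)\approx 0.45$. The Golomb--Dickman constant $0.6243$ is the \emph{mean} of this limit law, not a value below which $\rho$ concentrates. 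Second, the bound $b\le 6\rho(1-\rho)n+o(n)$ is unjustified: the heuristic ``the two faces across an edge are essentially independent'' is precisely what would have to be proved, together with concentration; without it you only have the trivial $b\le d_{c_\ast}=6\rho n$, which gives
\[
\frac{6\rho\, n\log 3}{(1-\rho)\,2\pi n}=\frac{3\rho\log 3}{(1-\rho)\pi}\approx 1.74\qquad(\rho\approx 0.62),
\]
useless even when $\rho$ sits near its mean. Your parenthetical fallback---bisecting the long cusp region by a sub-arc of $\partial F$ together with an arc of length $O(\log n)$ climbing into the cusp---is much closer in spirit to what the paper actually does, but then the spine $\Gamma'$ and the largest-face analysis are no longer relevant, and the constant $\tfrac23$ must be re-derived from the horocycle decomposition (the $2n$ small triangles against the total horocyclic area $\approx 6n$) rather than from the numerical accident $\lambda<\tfrac{2\pi}{9\log 3}$.
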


\begin{rem*}
\ben
\item Brooks and Zuk in \cite{BZ02} showed that as the covering degree goes to $\infty$, congruence covers of the moduli surface $\mathbb{H}/\mathrm{SL}(2,\mathbb{Z})$ have Cheeger constants less than $0.4402$. This roughly says that the analogue of the famous Selberg's $\frac{1}{4}$ eigenvalue conjecture in the context of the Cheeger constant is false (see \cite[Page 52]{BZ02}).

\item For the non-compact case, recently we showed in \cite{SW22} that as the genus $g$ goes to infinity, a Weil-Petersson cusped hyperbolic surface has arbitrarily small Cheeger constant provided that the number of cusps grows significantly faster than $g^{\frac{1}{2}}$.

\item By Buser's inequality \cite{Buser-ine}, a uniform spectral gap also yields a uniform positive lower bound for Cheeger constant. For this line, one may see \emph{e.g.} \cite{Sel65, GJ78, Iwa89, LRS95, KS03} for congruence covers of the moduli surface $\mathbb{H}/\mathrm{SL}(2,\mathbb{Z})$; see \emph{e.g.} \cite{MN20, MNP20, HM21} for random covering surfaces; and see \emph{e.g.} \cite{Mirz13, WX22-GAFA, LW21, Hide21} for Weil-Petersson random surfaces.
\een
\end{rem*}

We remark here that after this paper was submitted, very recently Budzinski, Curien and Petri showed in \cite{BCP22} that
\be
\limsup \limits_{g\to \infty}  h(X_g)\leq \frac{2}{\pi} \nonumber
\ene for any sequence of hyperbolic surfaces $\{X_g\}$ of genus $g$. This solved \cite[Problem $10.5$]{Wright-tour} due to Wright. It would be \emph{interesting} to know that whether this upper bound $\frac{2}{\pi}$ can be replaced by $\frac{1}{2}$.

\subsection*{Strategy on the proof of Theorem \ref{mt-i}.} We briefly introduce the idea on the proof of Theorem \ref{mt-i} here. By definition of the Cheeger constant, it suffices to show that for a generic surface $S^C(\Gamma,\mathcal{O})$, there exists a colletion of curves whose union separates $S^C(\Gamma,\mathcal{O})$ into two parts such that the ratio quantity in $h(S^C(\Gamma,\mathcal{O}))$ can be bounded from above by the desired upper bound in Theorem \ref{mt-i}. By the construction of hyperbolic surfaces in \cite{BM04}, we know that $$S^C(\Gamma,\mathcal{O})=\left(\bigcup\limits_{i\in\mathcal{I}}D_i\right)\bigcup\text{\{$2n$ small triangles\}}$$ where each $D_i$ is a horoball in $S^O(\Gamma,\mathcal{O})$ together with its infinity point, and each small triangle has three boundary curves of lengths $\leq 1$ (for example see Figure \ref{fig:15}). For large $n>0$ and a generic surface $S^C(\Gamma,\mathcal{O})$, we first split as $\bigcup\limits_{i\in\mathcal{I}}D_i=\left(\bigcup\limits_{i\in\mathcal{I}_1}D_i\right)\bigcup \left(\bigcup\limits_{i\in\mathcal{I}_2}D_i\right)$ where each $D_i$ with $i\in \mathcal{I}$ contains a hyperbolic disk of large radius (see Section \ref{sec-bounds}). Next we construct certain simple curves $\{\eta_i^j\}$ of lengths at most $O(\log n)$ separating each $D_i$ $(i\in \mathcal{I}_1)$ into certain subdomains $\{D(\eta_i^j,\eta_i^k)\}$, each of which has area at most $O\left(\frac{n}{(\log n)^2}\right)$ (see Lemma \ref{l-decom}). Rewrite the decomposition as
$$S^C(\Gamma,\mathcal{O})=\bigcup\limits_{i\in\mathcal{I}_1}
\left(\bigcup\limits_{j=1}^{k_i}D_{ij}\right)\bigcup\left(\bigcup
\limits_{i\in\mathcal{I}_2}D_i\right)\bigcup\{\text{$2n$ small triangles}\}.$$

\noindent Then one can make divisions of $S^C(\Gamma,\mathcal{O})$ as follows: fix two symbols $\mathcal{A},\mathcal{B}$ and define the so-called compatible mapping $J:S^C(\Gamma,\mathcal{O})\to \{\mathcal{A},\mathcal{B}\}$ such that 
\ben
\item on the interior of each piece of the decomposition of $S^C(\Gamma,\mathcal{O})$ above, $J$ is constant either $\mathcal{A}$ or $\mathcal{B}$; 
\item for three horocycle segments in any small triangle, there exists at most one of them such that it (if exists) is contained in the boundary $\partial \mathcal{A}(J)=\partial \mathcal{B}(J)$ where
\be \label{def=divi}
\mathcal{A}(J)=\bigcup\limits_{\Omega\in S^C(\Gamma,\mathcal{O}),\ J(\Omega)=\mathcal{A}}\Omega\text{ \ and \ }\mathcal{B}(J)=\bigcup\limits_{\Omega\in S^C(\Gamma,\mathcal{O}),\ J(\Omega)=\mathcal{B}}\Omega. \nonumber
\ene
\een
Each map $J$ induces a division $(\mathcal{A}(J),\mathcal{B}(J))$ of $S^C(\Gamma,\mathcal{O})$ (for example see Figure \ref{fig:15}). Define $X_n$ to be the set of all compatible mappings from $S^C(\Gamma,\mathcal{O})$ to $\{\mathcal{A},\mathcal{B}\}$, which is a finite set endowed with a uniform probability measure. Now we view $\ell_C(\partial \mathcal{A}(J))$ and $\min\left\{\textnormal{Area}_C(\mathcal{A}(J)), \textnormal{Area}_C(\mathcal{B}(J))\right\}$ as two random variables on $X_n$. Here the randomness only comes from the mapping $J$ and the surface $S^C(\Gamma,\mathcal{O})$ is fixed. With the help of Lemma \ref{l-decom} we show that for almost a generic $(\Gamma,\mathcal{O})\in \mathcal{F}_n^*$ defined in \eqref{def-Fnc}, the expected values satisfy  

$$\mathbb{E}\left[\ell_C(\partial \mathcal{A}(J))\right]\leq \frac{3n}{2}+o(n) \quad \text{(see Lemma \ref{l-exp-1})}$$  
and
$$\mathbb{E}\left[\min\left\{\textnormal{Area}_C(\mathcal{A}(J)), \textnormal{Area}_C(\mathcal{B}(J))\right\}\right]\geq (1-2\delta)^2 n \pi-o(n) \quad \text{(see Lemma \ref{l-exp-3})}$$ where $\delta \in (0, \frac{1}{2})$ is arbitrary. Recall that for any $J$,
\[h(S^C(\Gamma,\mathcal{O}))\leq \frac{\ell(\partial \mathcal{A}(J))}{\min\{\area(\mathcal{A}(J)),\area(\mathcal{B}(J))\}}. \]
Then Theorem \ref{mt-i} follows by letting $n\to \infty$ and $\delta \to 0$ since
\[h(S^C(\Gamma,\mathcal{O}))\leq \frac{\mathbb{E}\left[\ell_C(\partial \mathcal{A}(J))\right]}{\mathbb{E}\left[\min\left\{\textnormal{Area}_C(\mathcal{A}(J)), \textnormal{Area}_C(\mathcal{B}(J))\right\}\right]}\leq \frac{3}{2\pi(1-2\delta)^2}+\frac{o(n)}{n}.\]

\subsection*{Plan of the paper.} Section \ref{sec-pre} provides some necessary background from \cite{BM04} and basic properties on two-dimensional hyperbolic geometry. In Section \ref{sec-bounds}, based on \cite{MR1677565, BM04} we provide several bounds on hyperbolic lengths and areas, and also give a special division for certain disks from a decomposition of $S^C(\Gamma,\mathcal{O})$ which is important in the proof of Theorem \ref{mt-i} (see Lemma \ref{l-decom}). We prove our main result Theorem \ref{mt-i} in Section \ref{sec-proof}. 

\subsection*{Acknowledgements.}
The authors would like to thank the anonymous referees for their careful reading and valuable comments, and especially would like to thank one referee for sharing his/her idea on how to obtain the current upper bound $\frac{3}{2\pi}$ in Theorem \ref{mt-i}, improving our former one $\frac{2}{3}$. They significantly  improve the quality of this paper. We also would like to thank Yuhao Xue for helpful discussions, and thank Bram Petri and Zeev Rudnick for their interests and comments on this project. The second named author is partially supported by the NSFC grant No. $12171263$.

\tableofcontents

\section{Preliminary}\label{sec-pre}
The Belyi surfaces are compact Riemann surfaces which can be defined over the algebra number field $\overline{\mathbb{Q}}$. From \cite[Theorem 1]{Be79} we know that a compact Riemann surface $S$ is a Belyi surface if and only if there exists a covering $f:S\to\mathbb{CP}^1$ unramified outside $\{0,1,\infty\}$. In this section, we will mainly review the construction of Belyi surfaces as in \cite{BM04} by Brooks and Makover. For related notations and properties, the readers may also refer to \cite[section 4]{BM04} for more details.
\subsection{Two dimensional hyperbolic geometry}
Let $\mathbb{H}=\{x+y\textbf{i};\ y>0\}$ be the upper half-plane endowed with the standard hyperbolic metric $ds^2=\frac{dx^2+dy^2}{y^2}.$
\begin{figure}[ht]
\centering
\includegraphics[width=0.40\textwidth]{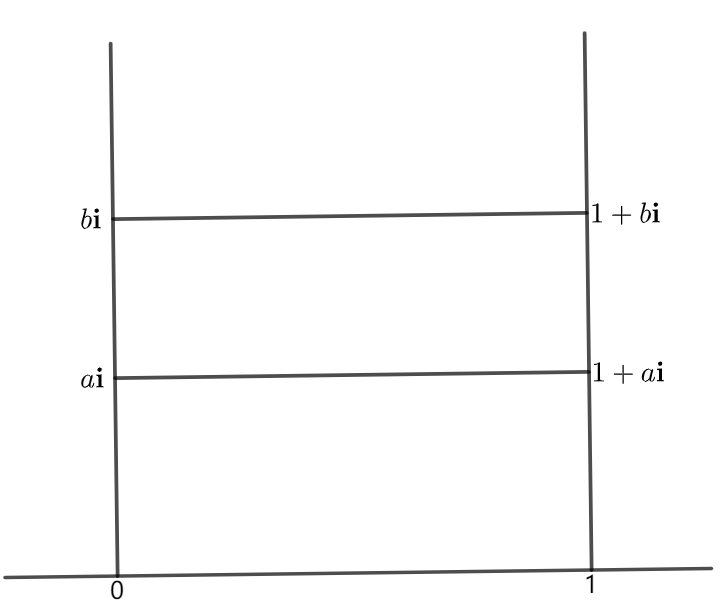}
\caption{Stripe domain}
\label{fig:00}
\end{figure}

For any $0<a<b$, set
\begin{align*}
u_a=\{t+a\textbf{i};\ 0\leq t\leq 1\} \ \text{and} \ v_{a,b}=\{t\textbf{i};\ a\leq t\leq b\}.
\end{align*}
Then their hyperbolic lengths satisfy 
\begin{align}\label{e-1ength}
\ell(u_a)=\frac{1}{a}\text{ and }\ell(v_{a,b})=\log\frac{b}{a}.
\end{align}
We also set
\begin{align*}
St(a)=\{x+y\textbf{i};\ 0\leq x\leq 1,a< y\}.
\end{align*}
Then its hyperbolic area satisfies 
\begin{align}\label{c-2}
\text{Area}(St(a))=\frac{1}{a}.
\end{align}

\subsection{Construction of Belyi surfaces}\label{s-1}
Now we recall the construction of Belyi surfaces in \cite{BM04}, i.e., the surface $S^C(\Gamma,\mathcal{O})$ associated with an oriented graph $(\Gamma,\mathcal{O})$. Let $\Gamma$ be a finite $3-$regular graph and $V(\Gamma)$ be the set of all its vertices. An orientation $\mathcal{O}$ on $\Gamma$ is an assignment, for each vertex $v\in V(\Gamma)$, of a cyclic ordering for the three edges emanating from $v$. Set
\begin{align}\label{def-Fn}
\mathcal{F}^\star_n=\left\{(\Gamma,\mathcal{O});\begin{matrix} &\Gamma\text{ is a $3$-regular graph with $2n$ vertices}\\ &\text{and }\mathcal{O}\text{ is an orientation on }\Gamma\end{matrix}\right\}.
\end{align}
Up to a M\"obius transformation, one may assume that an ideal triangle $T$ always has vertices $0,1$ and $\infty$ (see Figure \ref{fig:01}). The solid segments are geodesics joining the point $\frac{1+\sqrt3 \textbf{i}}{2}$ and the points in $\left\{\textbf{i},\ 1+\textbf{i}, \ \frac{1+\textbf{i}}{2}\right\}$. The dotted segments are horocycles joining pairs of points in $\left\{\textbf{i},\ 1+\textbf{i}, \ \frac{1+\textbf{i}}{2}\right\}$. The ideal triangle $T$ has a natural clockwise orientation $$\left(\textbf{i},\ 1+\textbf{i}, \ \frac{1+\textbf{i}}{2}\right).$$
In this article, similar as in \cite{BM04}, the points $\left\{\textbf{i},\ 1+\textbf{i}, \ \frac{1+\textbf{i}}{2}\right\}$ are called the \emph{mid-points} of the three sides of $T$, even each side has infinite length. And a dotted segment (see Figure \ref{fig:01}) joining two mid-points of an ideal triangle is always called  a \underline{short horocycle segment}. It is clear that each short horocycle segment has hyperbolic length equal to $1$.

\begin{figure}[ht]
\centering
\includegraphics[width=0.48\textwidth]{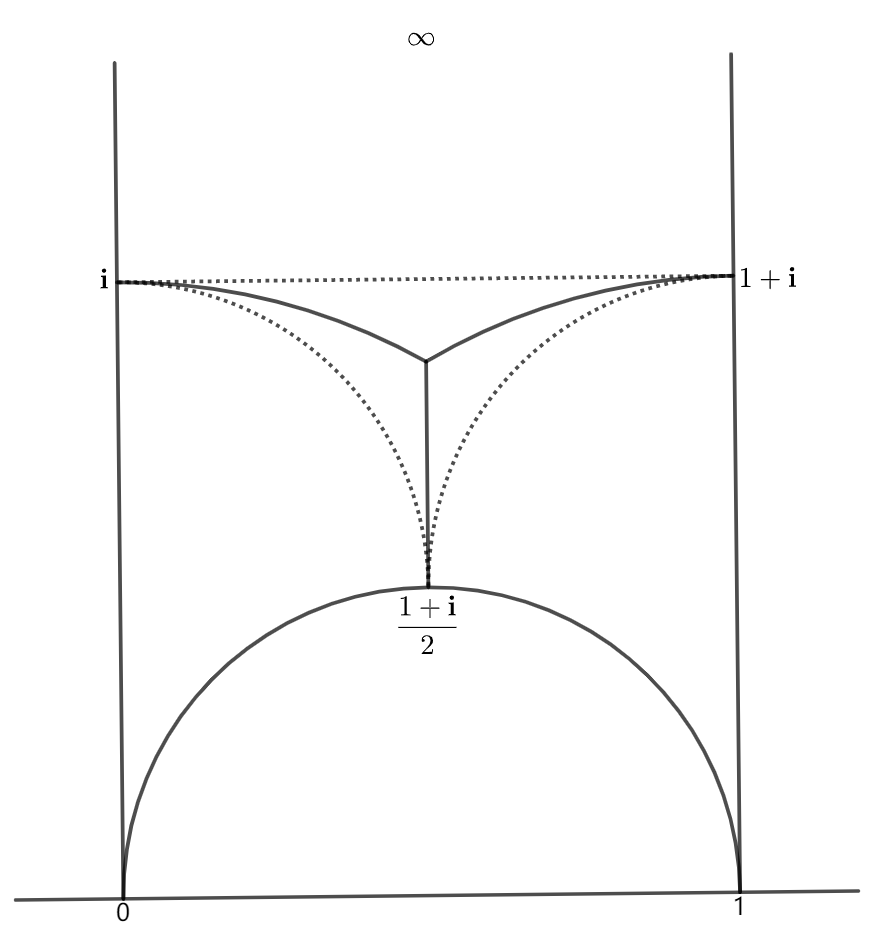}
\caption{Mid-points and short horocycle segments}
\label{fig:01}
\end{figure}

   Given an element $(\Gamma,\mathcal{O})\in\mathcal{F}^\star_n$, we replace each vertex $v\in V(\Gamma)$ by a copy of $T$, such that the natural clockwise orientation of $T$ coincides with the orientation of $\Gamma$ at the vertex $v$. If two vertices of $\Gamma$ are joined by an edge, we glue the two copies of $T$ along the corresponding sides subject to the following conditions:
\begin{enumerate}[(i)]
\item the mid-points of two sides are glued together;
\item the gluing preserves the orientations of two copies of $T$.
\end{enumerate}
As in \cite{BM04}, the surface $S^O(\Gamma,\mathcal{O})$ is uniquely determined by the two conditions above, and it is a complete hyperbolic surface with area equal to $2\pi n$.
\begin{Def*}
The compact Riemann surface $S^C(\Gamma,\mathcal{O})$ is defined as the conformal compactification of $S^O(\Gamma,\mathcal{O})$ by filling in all the punctures.
\end{Def*}
\noindent It is known that a Riemann surface $S$ is a Belyi surface if and only if $S$ can be represented as $$S=S^C(\Gamma,\mathcal{O})$$ for some $(\Gamma,\mathcal{O})\in\mathcal{F}_n^\star$ (e.g., see \cite[Lemma 2.1]{MR2271484}).

For any points $p,q,r\in\mathbb{R}\cup\{\infty\}$, denote by $\mathfrak{L}(p,q)$ the hyperbolic geodesic line joining $p$ and $q$, and denote by $\Delta(p,q,r)$ the ideal triangle with vertices $p,q$ and $r$. Let $L$ and $R$ be the matrices as follows:
$$R=\begin{pmatrix}1 & 1\\ 0 & 1\end{pmatrix}\text{ and } L=\begin{pmatrix}1 & 0\\ 1 & 1\end{pmatrix}.$$
They represent the following two automorphisms of $\mathbb{H}$:
\begin{align*}
R(z)=z+1\text{ and } L(z)=\frac{z}{z+1}.
\end{align*}
Then $L$ and $R$ generate a subgroup $G$ of $\mathrm{PSL(2,\mathbb{Z})}$. Set
$$G(T)=\{g(T);\ g\in G\}.$$
Then the set $G(T)$ consists of ideal triangles forming a partition of the upper half-plane $\mathbb{H}$ as in Figure \ref{fig:11-0}.

\begin{figure}[ht]
\centering
\includegraphics[width=0.65\textwidth]{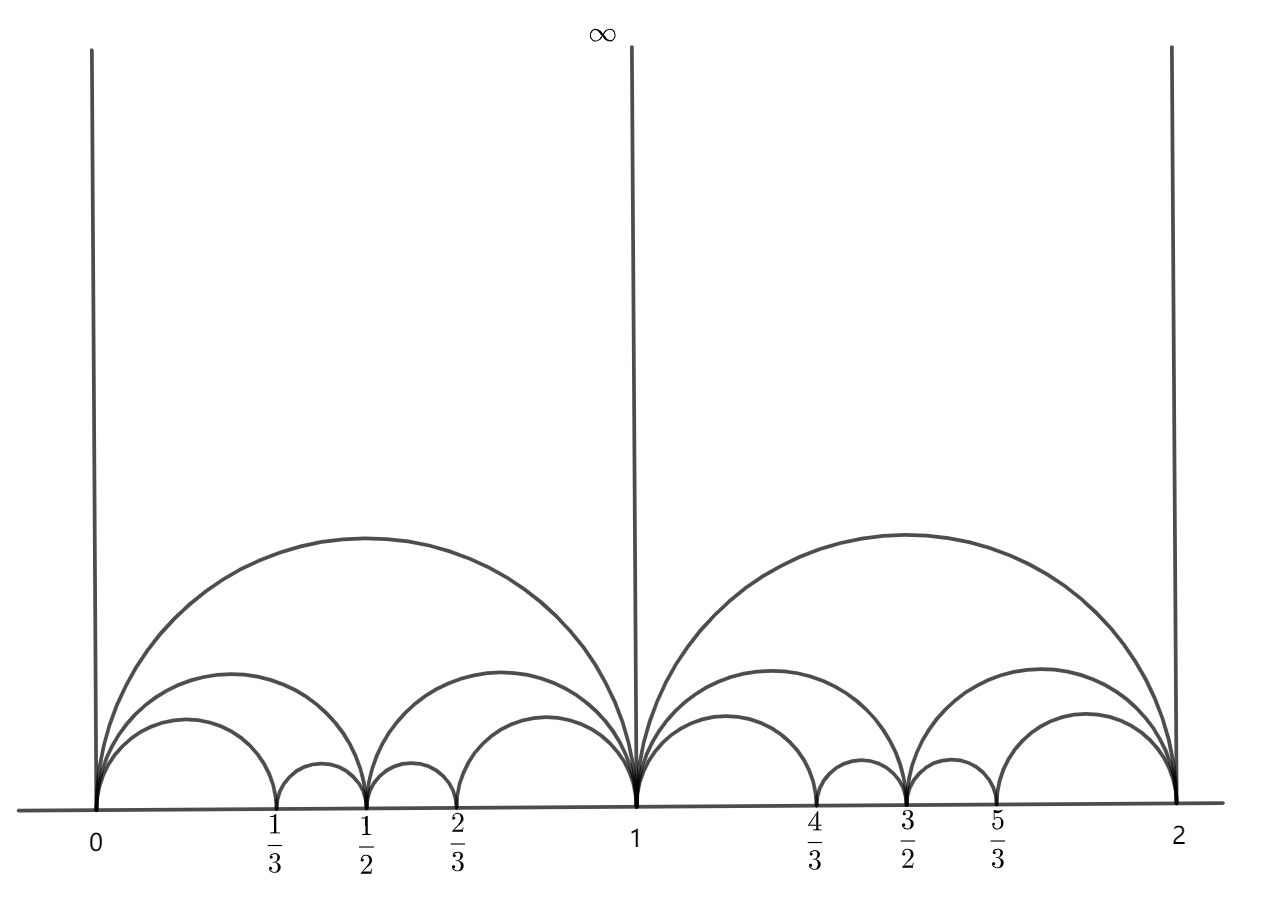}
\caption{$G(T)$ is a tiling of $\H$}
\label{fig:11-0}
\end{figure}

There exists a fundamental domain of $S^O(\Gamma,\mathcal{O})$ in $\mathbb{H}$ such that it is a finite union of ideal triangles in $G(T)$. For example, let $v_1$ and $v_2$ be two vertices of a  $3-$regular graph $\Gamma$ as shown in Figure \ref{fig:02}, and the orientations $\mathcal{O}_1 $ at $v_1$ and $v_2$ are $(1,2,3)$ and $(1,3,2)$ respectively. From the construction of $S^O(\Gamma,\mathcal{O}_1)$, it is not hard to see that the union $$\Delta(0,1,\infty)\cup\Delta(1,2,\infty)$$
is a fundamental domain of $S^O(\Gamma,\mathcal{O}_1)$ where the boundary sides $\mathfrak{L}(0,1)$ and $\mathfrak{L}(1,2)$ are identified, and the boundary sides $\mathfrak{L}(0,\infty)$ and $\mathfrak{L}(2,\infty)$ are identified. Then $S^O(\Gamma,\mathcal{O}_1)$ is a three punctured sphere (see Figure \ref{fig:02}). Using the same $3-$regular graph $\Gamma$, consider the orientation $\mathcal{O}_2$ such that the orientations at the two vertices are both $(1, 2, 3)$ as shown in Figure \ref{fig:02}, then $S^O(\Gamma,\mathcal{O}_2)$ is a torus with one puncture.

  \begin{figure}[ht]
\centering
\includegraphics[width=.9\textwidth]{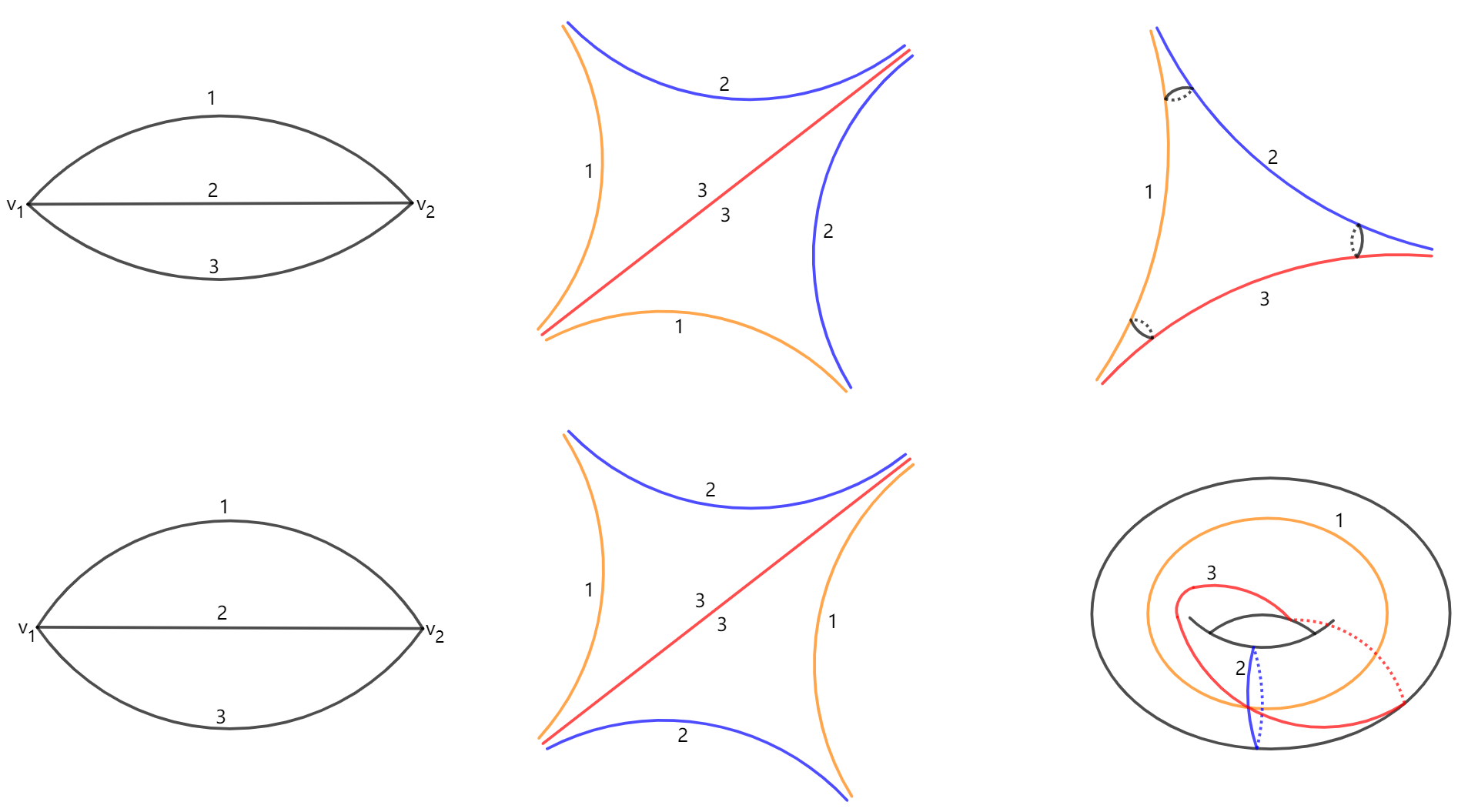}
\caption{One graph with two orientations}
\label{fig:02}
\end{figure}

\subsection{Horocycles around punctures}
Recall that \cite[Definition 4.1]{BM04} says that a \emph{left-hand-turn path} on $(\Gamma,\mathcal{O})$ is a closed path on $\Gamma$ such that at each vertex, the path turns left in the orientation $\mathcal{O}$. Every left-hand-turn path of $(\Gamma,\mathcal{O})$ corresponds to a horocycle loop in $S^O(\Gamma,\mathcal{O})$ which encloses a puncture point and consists of certain short horocycle segments as described in subsection \ref{s-1}. One may denote such loops by \underline{canonical horocycle loops}. For any two canonical horocycle loops, they are either disjoint or tangent to each other. We remark here that a canonical horocycle loop may also be tangent to itself. In the remaining, a tangency point always means either a tangency point between two canonical horocycle loops or a self-tangency point of one canonical horocycle loop.

\begin{rem*}
Since each ideal triangle contains three short horocycle segments and $S^O(\Gamma,\mathcal{O})$ consists of $2n$ ideal triangles, it follows that there are $6n$ short horocycle segments for each hyperbolic surface $S^O(\Gamma,\mathcal{O})$.
\end{rem*}

   For a random element $(\Gamma,\mathcal{O})\in\mathcal{F}^{\star}_n$, denote by $\mathrm{LHT(\Gamma,\mathcal{O})}$ the number of left-hand-turn paths in $(\Gamma,\mathcal{O})$ and by $\mathbb{E}_n[\mathrm{LHT(\Gamma,\mathcal{O})}]$ the expected value of $\mathrm{LHT(\Gamma,\mathcal{O})}$ over $\mathcal{F}^\star_n$. Then $\mathrm{LHT(\Gamma,\mathcal{O})}$ is also the number of punctures of $S^O(\Gamma,\mathcal{O})$, and the genus of $S^O(\Gamma,\mathcal{O})$ is equal to $1+\frac{n-\mathrm{LHT(\Gamma,\mathcal{O})}}{2}$. We enclose this section by the following estimate which is a direct consequence of \cite[Theorem 2.3]{BM04}.
 \begin{proposition}\label{p-2}
  There exist two universal constants $C_1,C_2>0$ independent of $n$ such that
     $$C_1+\log n\leq \mathbb{E}_n(\mathrm{LHT(\Gamma,\mathcal{O})})\leq C_2+\frac{3}{2}\log n.$$
 \end{proposition}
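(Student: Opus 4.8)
The plan is to derive the estimate directly from Brooks--Makover's Theorem 2.3 on the expected genus, by feeding it into the exact Euler--characteristic identity $\mathrm{genus}=1+\frac{n-\mathrm{LHT}}{2}$ recalled just above; no new probabilistic input is needed. First I would justify that identity pointwise on $\mathcal{F}^\star_n$. The $2n$ ideal triangles constituting $S^O(\Gamma,\mathcal{O})$ descend, after filling in the punctures, to a cell decomposition (with identifications along the sides) of the closed surface $S^C(\Gamma,\mathcal{O})$ having $F=2n$ triangular $2$--cells and $E=3n$ edges, since in the gluing prescription of Subsection~\ref{s-1} each of the three sides of each triangle is identified with exactly one other side, so $6n/2=3n$. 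Its $0$--cells are precisely the filled--in punctures of $S^O(\Gamma,\mathcal{O})$, and by \cite[Definition 4.1]{BM04} these punctures are in bijection with the left--hand--turn paths of $(\Gamma,\mathcal{O})$, whence $V=\mathrm{LHT}(\Gamma,\mathcal{O})$. Therefore
\[
2-2\,\mathrm{genus}\bigl(S^C(\Gamma,\mathcal{O})\bigr)=\chi\bigl(S^C(\Gamma,\mathcal{O})\bigr)=V-E+F=\mathrm{LHT}(\Gamma,\mathcal{O})-n,
\]
i.e. $\mathrm{LHT}(\Gamma,\mathcal{O})=n+2-2\,\mathrm{genus}\bigl(S^C(\Gamma,\mathcal{O})\bigr)$ for every $(\Gamma,\mathcal{O})\in\mathcal{F}^\star_n$.

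Since this identity holds pointwise, linearity of expectation with respect to $\mathrm{Prob}_n$ gives
\[
\mathbb{E}_n\bigl[\mathrm{LHT}(\Gamma,\mathcal{O})\bigr]=n+2-2\,\mathbb{E}_n\bigl[\mathrm{genus}\bigl(S^C(\Gamma,\mathcal{O})\bigr)\bigr].
\]
Now I would insert the two--sided bound on $\mathbb{E}_n[\mathrm{genus}]$ supplied by \cite[Theorem 2.3]{BM04} --- the quantitative form of the statement that the expected genus behaves like $\frac n2$, of the shape $\frac n2-\frac34\log n-O(1)\le\mathbb{E}_n[\mathrm{genus}]\le\frac n2-\frac12\log n+O(1)$. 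Substituting, the $\frac n2$ terms cancel and the displayed formula collapses to $\log n+C_1\le\mathbb{E}_n[\mathrm{LHT}(\Gamma,\mathcal{O})]\le\frac32\log n+C_2$ for suitable universal constants $C_1,C_2>0$, which is exactly the assertion; note that the upper bound on $\mathbb{E}_n[\mathrm{genus}]$ produces the lower bound on $\mathbb{E}_n[\mathrm{LHT}]$ and the lower bound on $\mathbb{E}_n[\mathrm{genus}]$ produces the upper bound.

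I do not expect a genuine obstacle here: the argument is one topological bookkeeping step followed by a citation. The only points that need care are (i) confirming that the $0$--cell count of the triangulation is exactly $\mathrm{LHT}$ --- in particular that distinct left--hand--turn paths give distinct vertices and that tangencies (including self--tangencies) of canonical horocycle loops do not create extra vertices --- and (ii) matching the precise logarithmic corrections in \cite[Theorem 2.3]{BM04} against the coefficients $1$ and $\frac32$, and checking the positivity of the resulting $C_1$; this last point is a purely arithmetic verification once the exact form of that theorem is written out.
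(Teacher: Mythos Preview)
Your proposal is correct and follows exactly the route the paper indicates: the authors state that Proposition~\ref{p-2} is ``a direct consequence of \cite[Theorem 2.3]{BM04}'' via the identity $\mathrm{genus}=1+\frac{n-\mathrm{LHT}}{2}$, which is precisely the Euler--characteristic bookkeeping you spell out in detail. Your caveats (i) and (ii) are the only points needing care, and the paper does not elaborate on them either.
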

\noindent Gamburd showed in \cite[Corollary 5.1]{MR2271484} that $\mathbb{E}_n(\mathrm{LHT(\Gamma,\mathcal{O})}) \sim \log(3n)$ as $n\to \infty$. In this paper Proposition \ref{p-2} of Brooks-Makover is enough for us where we only need the growth rate $\log n$. For other geometric quantities of random surfaces in this model, one may also see Petri \cite{Petri17} for the behavior of the systole function; and see Budzinski-Curien-Petri \cite{BCP19} for the behavior of the diameter.

\section{Bounds on lengths and areas}\label{sec-bounds}
We start this section with the following assumption.
\begin{Assum*}
Throughout this paper, we always assume $\mathrm{genus}(S^C(\Gamma,\mathcal{O}))\geq 2$. So one may let $ds_{S^O}^2$ and $ds_{S^C}^2$ be the unique hyperbolic metrics on $S^O(\Gamma,\mathcal{O})$ and $S^C(\Gamma,\mathcal{O})$ associated to their complex structures respectively.
\end{Assum*}

For any curve $\gamma\subset S^O(\Gamma,\mathcal{O})\subset S^C(\Gamma,\mathcal{O})$, we set
\begin{align*}
&\ell_O(\gamma)=\text{the length of }\gamma\text{ under the metric }ds_{S^O}^2;\\
&\ell_C(\gamma)=\text{the length of }\gamma\text{ under the metric }ds_{S^C}^2.
\end{align*}
For any subdomain $\Omega\subset S^O(\Gamma,\mathcal{O})\subset S^C(\Gamma,\mathcal{O})$, we also set
\begin{align*}
&\mathrm{Area}_{O}(\Omega)=\text{the area of }\Omega\text{ under the metric }ds_{S^O}^2;\\
&\mathrm{Area}_{C}(\Omega)=\text{the area of }\Omega\text{ under the metric }ds_{S^C}^2.
\end{align*}
Under such notations, by Gauss-Bonnet we have
\be
\mathrm{Area}_{O}(S^O(\Gamma,\mathcal{O}))=2\pi n \text{ \ and \ } \mathrm{Area}_{C}(S^C(\Gamma,\mathcal{O}))=2\pi(n-\mathrm{LHT(\Gamma,\mathcal{O})}).
\ene

\subsection{Schwarz's Lemma} The following estimate is well-known to experts. We prove it for completeness here.
\begin{lemma}\label{l-sch}
Assume $\gamma \subset S^O(\Gamma,\mathcal{O})$ is a smooth curve and $\Omega\subset S^O(\Gamma,\mathcal{O})$ is a subdomain, then we have
$$\ell_C(\gamma)\leq \ell_O(\gamma)\text{ and }\textnormal{Area}_C(\Omega)\leq\textnormal{Area}_O(\Omega).$$
In particular, for any short horocycle segment $\gamma \subset S^O(\Gamma,\mathcal{O})$, we have
$$\ell_C(\gamma)\leq \ell_O(\gamma)=1.$$
\end{lemma}
\begin{proof}
Let $\iota$ be the natural holomorphic embedding
$$\iota:S^O(\Gamma,\mathcal{O})\to S^C(\Gamma,\mathcal{O}).$$
It suffices to prove that for any point $p\in S^O(\Gamma,\mathcal{O})\subset S^C(\Gamma,\mathcal{O})$ and any tangent vector $\nu\in T_p S^O(\Gamma,\mathcal{O})$,
 $$|\nu|_{ds_{S^O}^2}\geq |\iota_\star(\nu)|_{ds_{S^C}^2}.$$
Consider the following commutative diagram
$$\xymatrix{
\mathbb{H} \ar[d]^{\pi_1} \ar[r]_f & \mathbb{H}\ar[d]_{\pi_2}\\
S^O(\Gamma,\mathcal{O}) \ar[r]_{\iota} & S^C(\Gamma,\mathcal{O})
}
$$
 where $\pi_1$ and $\pi_2$ are covering maps. Since $\mathbb{H}$ is simply connected, there exists a holomorphic lift $f:\mathbb{H}\to\mathbb{H}$ of $\iota$, i.e.,
 \begin{align}\label{e--2}
 \pi_2\circ f=\iota\circ \pi_1.
 \end{align}
Since $f$ is holomorphic, it follows by the standard Schwarz's Lemma that $f$ is $1$-Lipschitz. Then the conclusion follows because both the covering maps $\pi_1$ and $\pi_2$ are local isometries.
 \end{proof}

\subsection{Large cusps condition}
Given $(\Gamma,\mathcal{O})\in\mathcal{F}_n^\star$ that is defined in \eqref{def-Fn}, we let $\{L_i\}_{i\in\mathcal{I}(\Gamma,\mathcal{O})}$ denote the set of all canonical horocycle loops of $S^O(\Gamma,\mathcal{O})$. Write the index set $\mathcal{I}(\Gamma,\mathcal{O})$ as $\mathcal{I}$ for simplicity. For each $i\in\mathcal{I}$, the canonical horocycle loop $L_i$ bounds a punctured disk $N_i \subset S^O(\Gamma,\mathcal{O})$ with a puncture $p_i$. Moreover $D_i=N_i\cup \{p_i\}$ is an open topological disk in $S^C(\Gamma,\mathcal{O})$.

Now we recall the so-called large cusps condition defined in \cite{MR1677565, BM04}. First for simplicity, we write $S^O(\Gamma,\mathcal{O})$ and $S^C(\Gamma,\mathcal{O})$ as $S^O$ and $S^C$ respectively. For any $i\in\mathcal{I}$, up to a conjugation one may lift the puncture $p_i$ to
$\infty$ in the boundary $\partial \H$ of the upper half plane $\H$. Let $\pi_i:\H\to S^O$ be the covering map with $\pi_i(\infty)=p_i$ and assume that the Mobius transformation through doing one turn around $p_i$ corresponds to the parabolic isometry $ z\mapsto z+1$. For any $l>0$, denote
 $$C_i(l)=\pi_i\left(\left\{z\in\mathbb{H};\ \textnormal{Im}\ z\geq \frac{1}{l}\right\}\right)$$
 and
 $$h_i(l)=\pi_i\left(\left\{z\in\mathbb{H};\ \textnormal{Im}\ z= \frac{1}{l}\right\}\right).$$
 Then for suitable $l>0$, the set $C_i(l)$ is a cusp around $p_i$ whose boundary curve $h_i(l)$ is the projection of a horocycle. We also denote by $C^l$ the standard cusp
 $$C^l=\left\{z\in\mathbb{H};\ \textnormal{Im }z\geq\frac{1}{l}\right\}/(z\sim z+1)$$
 endowed with the hyperbolic metric.
 Now we recall \cite[Definition 2.1]{MR1677565} or \cite[Definition 3.1]{BM04} saying that.
\begin{Def*}
Given any $l>0$, a hyperbolic surface $S^O$ has \emph{cusps of length $\geq l$} if
\begin{enumerate}
\item $C_i(l)$ is isometric to $C^l$ for any $i\in\mathcal{I}$;
\item $C_{i_1}(l)\cap C_{i_2}(l)=\emptyset$ for any $i_1 \neq i_2\in\mathcal{I}$.
\end{enumerate}

\end{Def*}

Brooks and Makover in \cite{BM04} proved that for any $l>0$, as $n\to \infty$ a generic hyperbolic surface $S^O(\Gamma,\mathcal{O})$ has cusps of length $\geq l$. More precisely
\begin{theorem}$($\cite[Theorem 2.1]{BM04}$)$\label{t-2}
Let $(\Gamma,\mathcal{O})$ be a random element of $\mathcal{F}^\star_n$. Then for any $l>0$,
$$\lim \limits_{n\to \infty}\textnormal{Prob}_n\{S^O(\Gamma,\mathcal{O})\text{ has cusps of length}\geq l\}= 1.$$
\end{theorem}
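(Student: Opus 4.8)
The statement to prove is that for every fixed $l>0$, a random $S^O(\Gamma,\mathcal{O})$ has cusps of length $\geq l$ with probability tending to $1$ as $n\to\infty$. The plan is to translate both defining conditions of ``cusps of length $\geq l$'' into purely combinatorial conditions on the pair $(\Gamma,\mathcal{O})$, and then show each fails with probability $o(1)$ in Bollob\'as' model. First I would recall the correspondence between punctures of $S^O(\Gamma,\mathcal{O})$ and left-hand-turn paths on $(\Gamma,\mathcal{O})$: each puncture $p_i$ is enclosed by a canonical horocycle loop $L_i$ built from $k_i$ short horocycle segments, where $k_i$ is the length of the corresponding left-hand-turn path. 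Lifting $p_i$ to $\infty$ as in the large-cusp setup, the loop $L_i$ is the projection of the horocycle $\{\mathrm{Im}\,z = 1\}$ wrapped $k_i$ times; hence a cusp neighbourhood of ``depth'' $1/l$ embeds isometrically (condition (1)) precisely when $l \le k_i$, i.e.\ the issue is only with punctures coming from \emph{short} left-hand-turn paths. So condition (1) holds for all $i$ as soon as every left-hand-turn path has combinatorial length $\ge l$, equivalently the graph $\Gamma$ (with its orientation) has no left-hand-turn path of length $< l$.

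The key probabilistic input is then: in Bollob\'as' configuration model on $\mathcal{F}^\star_n$, the expected number of left-hand-turn paths of any fixed length $j$ stays bounded (indeed, by the Brooks–Makover first-moment computation underlying Proposition \ref{p-2}, the expected number of such short paths is $O(1/\text{something})$ or at worst $O(1)$ per length with total expectation $\sim \log n$ spread over lengths up to $n$). More carefully, for fixed $j$ the probability that a prescribed oriented closed walk of length $j$ in $K_{2n}$-style configurations is realized as a left-hand-turn path is $O(n^{-j})\cdot(\text{orientation factor})$, while the number of such potential walks is $O(n^{j})$; a careful accounting shows $\mathbb{E}_n[\#\{\text{LHT paths of length }j\}] = O(1)$ and in fact the \emph{short} ones ($j$ fixed) contribute expectation $\to 0$ relative to allowing $n\to\infty$ once one uses that distinct vertices must be involved. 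I would cite or reproduce the relevant lines of \cite[Section 3]{BM04} here. By Markov's inequality, summing over the finitely many $j < l$, the probability that \emph{some} left-hand-turn path has length $< l$ tends to $0$. This handles condition (1).

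For condition (2), disjointness of the cusp regions $C_{i_1}(l)$ and $C_{i_2}(l)$, I would argue that two canonical horocycle loops $L_{i_1}, L_{i_2}$ are either disjoint or tangent, and a tangency forces a short configuration in $(\Gamma,\mathcal{O})$ — specifically, two left-hand-turn paths sharing an edge-side in a controlled way — which again is a fixed-size combinatorial pattern occurring with probability $o(1)$ by the same first-moment estimate; more quantitatively, the horoball of ``height'' $1/l$ at $p_{i_1}$ meets that at $p_{i_2}$ only if the shortest geodesic arc between the two punctures has length $< 2\log l$ (a standard hyperbolic-trigonometry computation using the collar/cusp geometry recalled in \eqref{c-1}–\eqref{c-2}), and such a short connection again corresponds to a bounded-length subgraph pattern whose expected count in the random model vanishes as $n\to\infty$. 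Intersecting the two ``good'' events and letting $n\to\infty$ gives the theorem.

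The main obstacle is the bookkeeping in the first-moment argument: one must set up the Bollob\'as configuration model carefully enough that ``a left-hand-turn path of length $j$ at a specified location'' has a clean probability, accounting for the cyclic orientation at each vertex (a factor of $2$ per vertex) and the non-backtracking/left-turning constraint, and then check that the number of length-$j$ patterns times that probability is genuinely $o(1)$ for each fixed $j<l$ (not merely $O(1)$ — though $O(1)$ per length with a $1/n$-type gain from requiring the pattern to actually close up correctly is what one expects). This is exactly the computation Brooks and Makover carry out; in our write-up I would either quote \cite[Theorem 2.1A]{BM04} directly (as the statement already does) or, if a self-contained proof is wanted, reproduce their moment estimate and the hyperbolic-geometry lemma converting ``short left-hand-turn path'' and ``nearby punctures'' into the failure of conditions (1) and (2).
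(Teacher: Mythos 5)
This theorem is \emph{cited} from \cite{BM04}; the paper does not reproduce a proof, so there is no internal argument to compare against. Evaluating your sketch on its own merits, there is a genuine gap in the probabilistic core of the argument.

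Your plan reduces condition (1) to the combinatorial statement ``every left-hand-turn path on $(\Gamma,\mathcal{O})$ has length $\geq l$,'' and then tries to show this holds with probability $\to 1$ by a first-moment bound, suggesting that $\mathbb{E}_n[\#\{\text{LHT paths of length }j\}]\to 0$ for each fixed $j<l$. That expectation does \emph{not} vanish. The left-hand-turn permutation on the $6n$ darts (compose the random perfect matching with the random vertex rotations) behaves, for the purpose of short-cycle counts, like a random permutation: the expected number of $j$-cycles for fixed $j$ tends to a positive constant (already $\mathbb{E}_n[\#\{\text{LHT paths of length }1\}]\to 1$, and this is consistent with $\mathbb{E}_n[\mathrm{LHT}]\sim\log(3n)$ and with Gamburd's Poisson--Dirichlet limit). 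Hence the event ``some left-hand-turn path has length $<l$'' has probability bounded away from $0$ as $n\to\infty$; your proposed sufficient condition is generically \emph{violated}, and Markov's inequality cannot close the argument. The paper itself silently acknowledges this: Lemma \ref{l-6} and its proof in the Appendix explicitly treat punctures with $d_j\leq l$ \emph{under} the hypothesis that $S^O(\Gamma,\mathcal{O})$ has cusps of length $\geq l$, which shows that ``cusps of length $\geq l$'' is strictly weaker than ``$d_i\geq l$ for all $i$.'' The implication you invoke is only one-sided (short loops give $C_i(l)\subset N_i$ automatically), and the cusps-of-length-$\geq l$ condition can and typically does hold in the presence of a few short canonical horocycle loops, provided the graph is locally tree-like on a scale depending on $l$ near those loops. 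The same issue infects your treatment of condition (2): you describe tangency of two canonical horocycle loops as forcing a ``short configuration'' of vanishing expectation, but canonical horocycle loops are \emph{always} mutually (or self-) tangent at the midpoints of the triangulation edges, and short subgraph patterns in the configuration model again have expectation $\Theta(1)$, not $o(1)$. The correct route in \cite{BM04} is to isolate a strictly weaker combinatorial ``large-cusp'' criterion that tolerates the $\Theta(1)$ short left-hand-turn paths and short cycles, and to show \emph{that} criterion is generic; the first-moment bookkeeping you outline would have to be repointed at that weaker criterion rather than at ``no short LHT paths.''
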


  Let $S^C$ be the conformal compactification of $S^O$ endowed with the hyperbolic metric $ds_{S^C}^2$, i.e., $S^C=S^O\bigcup\{p_i\}_{i\in\mathcal{I}}$. For any point $p\in S^C$ and $r>0$, denote
$$\mathbb{B}(p,r)=\text{the closed geodesic ball centered at $p$ of radius $r$ under $ds_{S^C}^2$}.$$ According to the proof of \cite[Theorem 2.1]{MR1677565}, the following theorem holds.
\begin{theorem}$($\cite[Theorem 2.1]{MR1677565}$)$\label{t-5}
For every $\epsilon>0$, there exist two positive constants $l(\epsilon)$ and $r(\epsilon)$ which only depend on $\epsilon$ such that if $S^O$ has cusps of length $\geq l(\epsilon)$, then we have
\begin{enumerate}
\item Outside of $\bigcup\limits_{i\in\mathcal{I}} C_i(l(\epsilon))$ and $\bigcup\limits_{i\in\mathcal{I}}\mathbb{B}(p_i,r(\epsilon))$ we have $$\frac{1}{1+\epsilon}ds^2_{S^O}\leq ds^2_{S^C}\leq (1+\epsilon)ds^2_{S^O}.$$

\item For any $i\in\mathcal{I}$, $$\mathbb{B}(p_i,(1+\epsilon)^{-\frac{3}{2}}r(\epsilon))\subset C_i(l(\epsilon))\cup\{p_i\}\subset\mathbb{B}(p_i,(1+\epsilon)^{\frac{3}{2}}r(\epsilon)).$$
\end{enumerate}
\end{theorem}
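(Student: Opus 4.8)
\emph{Plan of proof.} I would treat the two assertions separately; in both, the only analytic input is the Schwarz lemma.

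For assertion (1) the bound $ds^2_{S^C}\le ds^2_{S^O}$ is exactly Lemma \ref{e--1}, so only the reverse bound needs work. I would pass to universal covers: let $\pi_C\colon\H\to S^C$ be the holomorphic universal covering with (cocompact) Fuchsian group $\Gamma$, and set $\Omega\Def\pi_C\inv(S^O)=\H\setminus E$, where $E=\pi_C\inv\big(\{p_i\}_{i\in\mathcal I}\big)$ is a $\Gamma$-invariant discrete subset of $\H$. Then $\pi_C|_{\Omega}\colon\Omega\to S^O$ is a covering map; the complete hyperbolic metric $\rho_\Omega$ of $\Omega$ is $\Gamma$-invariant, so it descends to a complete hyperbolic metric on $S^O$, which by uniqueness equals $ds^2_{S^O}$. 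Since $ds^2_{S^C}$ lifts to the restriction to $\Omega$ of the standard metric $\rho_\H$ of $\H$, comparing the two metrics at a point $x\in S^O$ amounts to comparing $\rho_\H$ and $\rho_\Omega$ at any lift $w\in\Omega$ of $x$. Now if $x\notin\bigcup_i\mathbb B(p_i,r(\epsilon))$ then, because $\pi_C$ is distance non-increasing, every point of $E$ lies at $\H$-distance $\ge r(\epsilon)$ from $w$, so the hyperbolic disc $\mathbb B_\H(w,r(\epsilon))$ is contained in $\Omega$; by monotonicity of the hyperbolic metric under inclusions of domains,
\[
\rho_\Omega(w)\ \le\ \rho_{\mathbb B_\H(w,r(\epsilon))}(w)\ =\ \coth\!\big(r(\epsilon)/2\big)\,\rho_\H(w).
\]
Taking $r(\epsilon)$ with $\coth(r(\epsilon)/2)\le 1+\epsilon$ yields $ds^2_{S^O}\le(1+\epsilon)\,ds^2_{S^C}$ there, which together with Lemma \ref{e--1} proves (1) on the complement of $\bigcup_i\mathbb B(p_i,r(\epsilon))$, hence a fortiori on the smaller region in the statement. (Assertion (1) in fact needs no hypothesis on the cusps; the term $\bigcup_i C_i(l(\epsilon))$ is kept only so the excluded set matches the one produced in (2).)

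For assertion (2), recall that under the hypothesis $C_i(l(\epsilon))\cup\{p_i\}$ is, via $w=e^{2\pi\textbf{i}z}$, a holomorphically embedded round disc $\Delta=\{\abs w<\delta_0\}\subset S^C$ with $\delta_0=e^{-2\pi/l(\epsilon)}$ and $p_i\leftrightarrow 0$, on which $ds^2_{S^O}$ is the cusp metric $\abs{dw}^2/\big(\abs w\log(1/\abs w)\big)^2$. Both inclusions reduce to pinning down $\dist_{S^C}\big(p_i,h_i(l(\epsilon))\big)$, since the points of $\Delta$ farthest from $p_i$ lie near its boundary circle $h_i(l(\epsilon))$. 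An upper bound for that distance follows from $ds^2_{S^C}\le\min\{\,ds^2_\Delta,\,ds^2_{S^O}\,\}$ on $\Delta$ (Lemma \ref{e--1} and the Schwarz lemma for $\Delta\hookrightarrow S^C$), whose line element is bounded and radially integrable; a lower bound follows from assertion (1) near $h_i(l(\epsilon))$ together with a maximum-principle barrier for the superharmonic function $\log\big(ds^2_{S^C}/ds^2_{S^O}\big)$ on the punctured disc. To make the two bounds agree to within the factor $(1+\epsilon)^{\pm 3/2}$, one needs in addition that $ds^2_{S^C}$ on $\Delta$ is uniformly $(1+\epsilon)$-close to the model metric $ds^2_\Delta$ near $p_i$ once $l(\epsilon)$ is large; granting this, choosing $l(\epsilon)$ of order $e^{r(\epsilon)}$ places $\dist_{S^C}(p_i,h_i(l(\epsilon)))$ in $\big[(1+\epsilon)^{-3/2}r(\epsilon),(1+\epsilon)^{3/2}r(\epsilon)\big]$, which is (2).

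I would not carry out here the routine computations: the disc-center density $\coth(r/2)$, the exponential-integral estimates for $\min\{ds^2_\Delta,ds^2_{S^O}\}$, and the barrier computation in (2). The real difficulty — and what I expect to be the main obstacle — is the \emph{tight} lower bound for $ds^2_{S^C}$ near the filled-in puncture: the statement that filling a long cusp leaves, near the new point, essentially the model disc geometry, up to a factor $1+\epsilon$. This is a quantitative form of the Schwarz lemma (stability of uniformization), and it is exactly where the large-cusp hypothesis is indispensable: it produces the definite-size embedded disc $\Delta$ and the long horocycle $h_i(l(\epsilon))$ (of $ds^2_{S^O}$-length $l(\epsilon)$) around $p_i$. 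Assertion (1), by contrast, is elementary.
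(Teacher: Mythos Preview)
The paper does not contain a proof of this statement. Theorem~\ref{t-5} is quoted verbatim from Brooks' paper \cite{MR1677565} (Theorem~2.1 there), and the accompanying remark only records that part~(2) is implicit in Brooks' proof and that the constants are linked by the relation $l(\epsilon)=2\pi/\ln\!\big((e^{r(\epsilon)}+1)/(e^{r(\epsilon)}-1)\big)$. So there is nothing in the present paper to compare your argument against.

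That said, your outline for part~(1) is correct and is in fact the standard argument: passing to the universal cover of $S^C$ identifies $ds^2_{S^O}$ with the hyperbolic metric of $\H\setminus E$ for a discrete $E$, and domain monotonicity plus the explicit density $\coth(r/2)$ at the center of a hyperbolic $r$-ball gives the reverse inequality. You are also right that part~(1) needs no large-cusp hypothesis. For part~(2) you have identified the genuine issue---a two-sided estimate on $\dist_{S^C}(p_i,h_i(l))$---and your sketch (Schwarz lemma against the model disc for the upper bound, a barrier/maximum-principle argument for the lower bound) is the right shape, but as written it is an outline rather than a proof: the ``uniformly $(1+\epsilon)$-close to the model metric near $p_i$'' step is precisely the content of Brooks' computation, and you would need to supply it to claim a complete argument. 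The relation $l(\epsilon)\asymp e^{r(\epsilon)}$ you arrive at is consistent with the formula recorded in the paper's remark.
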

\begin{rem*}
Part $(2)$ above is contained in the proof of \cite[Theorem 2.1]{MR1677565} (see \cite[Page 164]{MR1677565}). Moreover as in \cite{MR1677565}, the chosen constants $l(\epsilon)\to \infty$ and $r(\epsilon)\to \infty$ as $\epsilon\to 0$. Actually the two constants $l(\epsilon)$ and $r(\epsilon)$ satisfy that $$l(\epsilon)=\frac{2\pi}{\ln \left(\frac{e^{r(\epsilon)}+1}{e^{r(\epsilon)}-1} \right)}.$$
\end{rem*}

\noindent Now we take $\epsilon=1$ in Theorem \ref{t-5} and fix the constant $r(1)>0$. From the remark above, we take $0<a<1$ such that
\begin{align}\label{e-take}
r(a)>8r(1)
\end{align}
which will be applied in the subsequent subsection. Consider the following subset of $\mathcal{F}^\star_n$ which has large proportion to the whole set. More precisely, for any $c>0$, we define
 \begin{align}\label{def-Fnc}
\mathcal{F}^\star_n(c)\overset{\text{def}}{=}\left\{(\Gamma,\mathcal{O})\in\mathcal{F}^\star_n;\begin{matrix}
&S^O(\Gamma,\mathcal{O})\text{ has cusps of length}\geq l(a)\\ &\text{ and }\mathrm{LHT(\Gamma,\mathcal{O})}\leq c\log n\end{matrix}\right\}.
\end{align}
By Proposition \ref{p-2} and Markov's inequality we have
\begin{eqnarray*}
 \textnormal{Prob}_n\{(\Gamma,\mathcal{O})\in\mathcal{F}^\star_n; \ \mathrm{LHT(\Gamma,\mathcal{O})}>c\log n\}<\frac{C_2+\frac{3}{2}\log n}{c\log n}
\end{eqnarray*}
which together with Theorem \ref{t-2} implies that for $n$ large enough,
 \begin{align}\label{e-0}
  \textnormal{Prob}_n\{(\Gamma,\mathcal{O})\in\mathcal{F}^\star_n(c)\}\geq 1-\frac{2}{c}.
  \end{align}
Since the genus of $S^C(\Gamma,\mathcal{O})$ is equal to $1+\frac{n-\mathrm{LHT(\Gamma,\mathcal{O})}}{2}$,
we have that for any $(\Gamma,\mathcal{O})\in\mathcal{F}^\star_n(c)$,
  \begin{align}\label{e-coarea}
  \textnormal{Area}_C\left(S^C(\Gamma,\mathcal{O})\right)=2\pi\left(n-\textnormal{LHT}(\Gamma,\mathcal{O})\right) \geq 2\pi(n-c\cdot\log n).
  \end{align}
\subsection{Divisions of disks}
Let $(\Gamma,\mathcal{O})\in\mathcal{F}^\star_n(c)$ that is defined in \eqref{def-Fnc}. For each $i\in\mathcal{I}$, we assume that the canonical horocycle loop $L_i$ around $p_i$ consists of $d_i$ short horocycle segments. So we have \be\label{eq-di}
\ell_O(L_i)=d_i  \text{\ \ and } \ \textnormal{Area}_O(N_i)=d_i \ene
where $N_i \subset S^O(\Gamma,\mathcal{O})$ is the punctured disk enclosed by the canonical horocycle loop $L_i$ around $p_i$ in $S^O(\Gamma,\mathcal{O})$. Recall that each $S^O(\Gamma,\mathcal{O})$ consists of $2n$ ideal triangles each of which contains three short horocycle segments. So we have \begin{align}\label{e-sum}\sum\limits_{i\in\mathcal{I}}d_i=6n.\end{align}
Divide $\mathcal{I}$ into the following two subsets
\begin{align}
\mathcal{I}_1\overset{\text{def}}{=}\left\{i\in\mathcal{I};\ d_i>\frac{n}{(\log n)^2}\right\}\text{ and \  }
\mathcal{I}_2\overset{\text{def}}{=}\left\{i\in\mathcal{I};\ d_i\leq\frac{n}{(\log n)^2}\right\}.
\end{align}
If $i\in\mathcal{I}_1$, then for $n$ large enough we have
   \begin{align}\label{e-9}d_i>\frac{n}{(\log n)^2}>l(a).\end{align}
Recall that for every $i\in\mathcal{I}$, $D_i=N_i\cup \{p_i\}$ is a topological disk in $S^C(\Gamma,\mathcal{O})$. For any $i\in\mathcal{I}_1$, as in Figure \ref{fig:07}, there are four closed loops around the puncture point $p_i$ as follows:
\begin{enumerate}[(a)]
 \item $L_i$ is the canonical horocycle loop under the hyperbolic metric $ds_{S^O}^2$;
 \item $h_i(l(a))$ is the horocycle loop with length  $l(a)$ under the hyperbolic metric $ds_{S^O}^2$;
 \item $h_i(l(1))$ is the horocycle loop with length $l(1)$ under the hyperbolic metric $ds_{S^O}^2$;
 \item  the loop $\partial \mathbb{B}(p_i,2\sqrt 2r(1))$ is the boundary of the geodesic ball $\mathbb{B}(p_i,2\sqrt 2r(1))$ under the hyperbolic metric $ds_{S^C}^2$.
\end{enumerate}
\begin{figure}[ht]
\centering
\includegraphics[width=0.5\textwidth]{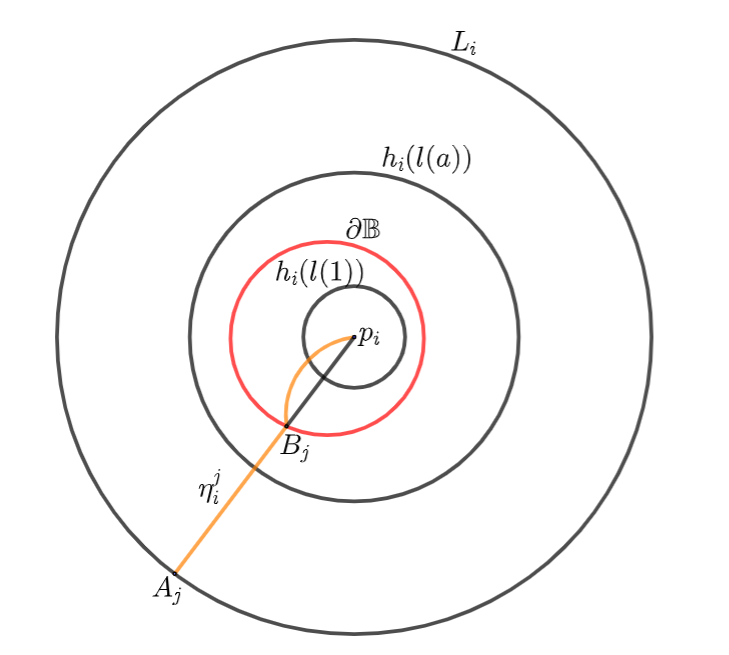}
\caption{Four loops around one puncture}
\label{fig:07}
\end{figure}
Then combining with \eqref{e-take}, \eqref{e-9} and Part $(2)$ of Theorem \ref{t-5}  we have
\be \label{multi-incl}
C_i(l(1))\cup\{p_i\}\subset\mathbb{B}(p_i,2\sqrt{2}r(1))
\subset \mathbb{B}\left(p_i,\frac{r(a)}{(1+a)^{3/2}}\right)\subset C_i(l(a))\cup \{p_i\}\subset D_i
\ene
which in particular tells that the four loops $L_i, h_i(l(a)), h_i(l(1))$ and $\partial \mathbb{B}(p_i,2\sqrt 2r(1))$ are pairwisely disjoint.

Assume that the set of all tangency points on $L_i$ is $$\{A_j\}_{1\leq j \leq d_i}$$ arranged in the anti-clockwise direction. For each $1\leq j\leq d_i$, we let $p_iA_j$ be the geodesic ray joining $p_i$ and $A_j$ under the hyperbolic metric $ds^2_{S^O}$. We remark here that there may exist several intersection points between $p_iA_j$ and the loop $\partial\mathbb{B}(p_i,2\sqrt 2r(1))$. Now define $B_{j}$ to be the last intersection point between them on the direction from $p_i$ to $A_j$, i.e., $B_j$ is the unique point on the ray $p_iA_j$ such that $$B_jA_j\cap \partial\mathbb{B}(p_i,2\sqrt 2r(1))=B_j$$ where $B_{j}A_j$ is the subsegment of $p_iA_j$ joining $B_{j}$ and $A_j$. By \eqref{multi-incl} we know that $\mathbb{B}(p_i,2\sqrt2 r(1))\subset C_i(l(a))\cup \{p_i\}$ where $C_i(l(a))\cup \{p_i\}$ is topologically a disk. Then for any point $Q\in \mathbb{B}(p_i,2\sqrt2 r(1))$, there exists a unique shortest geodesic $\overline{p_iQ}$ joining $p_i$ and $Q$
under the hyperbolic metric $ds_{S^C}^2$. In particular, we let $\overline{p_iB_{j}}$ be the unique shortest geodesic joining $p_i$ and $B_j$. Now consider the concatenation (see Figure \ref{fig:07} for an illustration)
\be\label{eta-i}
\eta_i^j=\overline{p_iB_{j}}\cup B_{j}A_j.
\ene
By construction, it is not hard to see that
\ben
\item for $1\leq j\neq k\leq d_i$, $\eta_i^j\cap \eta_i^k=\{p_i\}$;

\item the curve $\eta_i^j$ joins $p_i$ and $A_j$ with $\eta_i^j \subset D_i$.
\een
Moreover, the length $\ell_C(\eta_i^j)$ can be effectively bounded from above. More precisely, it follows by \eqref{e-1ength}, \eqref{e-sum} and Lemma \ref{l-sch} that for each $1\leq j \leq d_i$ and $n$ large enough,
\begin{align}\label{e-curve}
\ell_C(\eta_i^j)&=\ell_C\left(\overline{p_iB_{j}}\right)+\ell_C\left(B_{j}A_j\right)\\
&\leq \ell_C\left(\overline{p_iB_{j}}\right)+\ell_O\left(B_{j}A_j\right)\nonumber\\
&< 2\sqrt2 r(1)+\log \frac{d_i}{l(1)}\nonumber\\
&\leq 2\log n. \nonumber
\end{align}

  Assume that $\alpha,\beta \subset D_i$ are two simple curves joining $p_i$ and certain points on $L_i$ such that $\alpha\cap\beta=\{p_i\}$ and both $\alpha$ and $\beta$ only intersect with $L_i$ at their endpoints. Denote by $D(\alpha,\beta)$ the domain enclosed by $\alpha,\ \beta$ and $L_i$ in the anti-clockwise direction from $\alpha$ to $\beta$ (see Figure \ref{ab-d} for an illustration).
\begin{figure}[ht]
\centering
\includegraphics[width=0.45\textwidth]{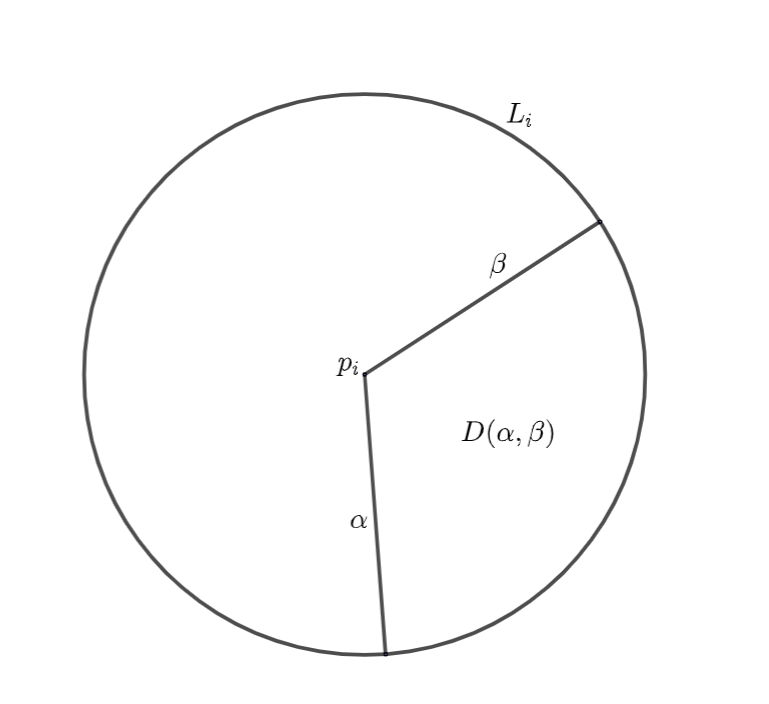}
\caption{A domain $D(\alpha, \beta)$}
\label{ab-d}
\end{figure}
 For any $1\leq j\neq k\leq d_i$, for simplicity we denote $$
  \textnormal{Area}_O^i(j,k)=\textnormal{Area}_O\left(D\left(\eta_i^j,\eta_i^k\right)\right)\text{ and }
   \textnormal{Area}_C^i(j,k)=\textnormal{Area}_C\left(D\left(\eta_i^j,\eta_i^k\right)\right).$$

Now we give a division for each $D_i$ $(i\in\mathcal{I}_1)$ and prove the following result which is important in the proof of Theorem \ref{mt-i}.
\begin{lemma}[A division of $D_i$]\label{l-decom}
 Assume $(\Gamma,\mathcal{O})\in\mathcal{F}_n^\star(c)$ that is defined in \eqref{def-Fnc} and $i\in\mathcal{I}_1$, then for $n$ large enough, there exist a positive integer $k_i$ with $k_i\leq 2(\log n)^2$ and a sequence of integers $\{m_j\}_{1\leq j \leq k_i}$ with
 $$1=m_1<m_2<...<m_{k_i}\leq d_i$$
 such that for each $1\leq j\leq k_i-1$
\be
\textnormal{Area}_C^i(m_j,m_{j+1})\leq\frac{2d_i}{(\log n)^2}\text{ and }m_{j+1}-m_j\leq\frac{3d_i}{(\log n)^2},
\ene
and
\be
\textnormal{Area}_C^i(m_{k_i},m_{1})\leq\frac{2d_i}{(\log n)^2}\text{ and }m_1+d_i-m_{k_i}\leq\frac{3d_i}{(\log n)^2}.
\ene
\end{lemma}
\begin{proof}
Recall that $D_i=N_i \cup \{p_i\}$. First it follows from \eqref{eq-di} and Lemma \ref{l-sch} that
\begin{align}\label{e-ub}\textnormal{Area}_C(D_i)=\textnormal{Area}_C(N_i)\leq \textnormal{Area}_O(N_i)=d_i.\end{align}
Notice that $S^C\setminus D_i=S^O\setminus N_i$. Similarly, it follows by Lemma \ref{l-sch} that $\textnormal{Area}_C(S^C\setminus D_i)\leq \textnormal{Area}_O(S^O\setminus N_i)$ implying
$$\textnormal{Area}_O(N_i)-\textnormal{Area}_C(D_i)\leq \textnormal{Area}_O(S^O)-\textnormal{Area}_C(S^C)\leq 2\pi c\cdot\log n,$$
where we apply our assumption $(\Gamma,\mathcal{O})\in\mathcal{F}_n^\star(c)$ in the last inequality.
Since $i\in\mathcal{I}_1$, we have that for $n$ large enough,
\begin{align}\label{e-lb}
\textnormal{Area}_C(D_i)\geq d_i-2\pi c\cdot\log n\geq \frac{n}{(\log n)^2}-2\pi c\cdot\log n.
\end{align}
Recall that for any $1\leq j\leq d_i$, $p_iA_j$ is the geodesic ray joining $p_i$ and $A_j$ under the hyperbolic metric $ds^2_{S^O}$. Then it follows from \eqref{c-2}, \eqref{e-lb} and Lemma \ref{l-sch} that for any $1\leq j\leq d_i-1$ and $n$ large enough,
\begin{align}\label{e-domain}
   \textnormal{Area}_C^i(j,j+1)&\leq\textnormal{Area}_C\left(\mathbb{B}\left(p_i,2\sqrt2 r(1)\right)\right)+\textnormal{Area}_C\left(D\left(p_iA_j,p_iA_{j+1}\right)\right)\\
&\leq 4\pi\left(\cosh \left(2\sqrt2 r(1)\right)-1\right)+\textnormal{Area}_O\left(D\left(p_iA_j,p_iA_{j+1}\right)\right) \nonumber\\
&= 4\pi\left(\cosh \left(2\sqrt2 r(1)\right)-1\right)+1\nonumber \\
&\leq \frac{\textnormal{Area}_C(D_i)}{(\log n)^2}. \nonumber
\end{align}
Similarly, for $j=d_i$ we also have
\be
 \textnormal{Area}_C^i(d_i,d_i+1)\leq \frac{\textnormal{Area}_C(D_i)}{(\log n)^2}.
\ene

\noindent Choose an integer $k_i \in [2,d_i]$ and a sequence of increasing integers $\{m_j\}_{j=1}^{k_i}$ with $m_{k_i}\leq d_i$
such that they satisfy the following three conditions: \begin{enumerate}[(a)]
 \item $m_1=1$;
 \item for each $1\leq j\leq k_i-1$,
 $$m_{j+1}=\min\limits_{m_j\leq m\leq d_i}\left\{m;\ \textnormal{Area}_C^i(m_j,m)\geq\frac{\textnormal{Area}_C(D_i)}{(\log n)^2}\right\};$$
 \item $\textnormal{Area}_C^i(m_{k_i},m_1)\leq\frac{2\textnormal{Area}_C(D_i)}{(\log n)^2}$.
\end{enumerate}
Now we prove that the $k_i$ and $\{m_j\}_{j=1}^{k_i}$ are the desired integer and sequence. First from \eqref{e-ub} and Condition $(c)$ we have
\be
\textnormal{Area}_C^i(m_{k_i},m_1)\leq\frac{2\textnormal{Area}_C(D_i)}{(\log n)^2}\leq\frac{2d_i}{(\log n)^2}.
\ene
Next it follows from \eqref{e-ub}, \eqref{e-domain} and Condition $(b)$ that for each $1\leq j\leq k_i-1$,
\begin{align}\label{e-ulb}
\frac{\textnormal{Area}_C(D_i)}{(\log n)^2}\leq \textnormal{Area}_C^i(m_j,m_{j+1})\leq\frac{2\textnormal{Area}_C(D_i)}{(\log n)^2}\leq\frac{2d_i}{(\log n)^2},
\end{align}
where the second inequality holds since from \eqref{e-domain} and Condition $(b)$ we have
\begin{align*}
\textnormal{Area}_C^i(m_j,m_{j+1})&= \textnormal{Area}_C^i(m_j,m_{j+1}-1)+\textnormal{Area}_C^i(m_{j+1}-1,m_{j+1})\\
&\leq \frac{2\textnormal{Area}_C(D_i)}{(\log n)^2}.
\end{align*}
For each $1\leq j\leq k_i-1$, we have
\begin{align}\label{e-ulb-2}
&\textnormal{Area}_O(D(p_iA_{m_j},p_iA_{m_{j+1}}))-\textnormal{Area}_C(D(p_iA_{m_j},p_iA_{m_{j+1}})) \\
&\leq\textnormal{Area}_O(S^O)-\textnormal{Area}_C(S^C)\leq 2\pi c\cdot\log n.\nonumber
\end{align}
Similar as the first inequality in \eqref{e-domain} we also have
\be \label{e-ulb-3}
\textnormal{Area}_C(D(p_iA_{m_j},p_iA_{m_{j+1}}))\leq \textnormal{Area}_C\left(\mathbb{B}\left(p_i,2\sqrt2 r(1)\right)\right)+\textnormal{Area}_C(m_j,m_{j+1}).
\ene
Recall that $i\in\mathcal{I}_1$. So $d_i>\frac{n}{(\log n)^2}$. Then it follows by \eqref{e-ulb}, \eqref{e-ulb-2} and \eqref{e-ulb-3} that for  each $1\leq j\leq k_i-1$ and $n$ large enough,
\be
m_{j+1}-m_j=\textnormal{Area}_O(D(p_iA_{m_j},p_iA_{m_{j+1}}))\leq\frac{3d_i}{(\log n)^2}.
\ene

\noindent By a similar argument, if $j=k_i$, we also have
\be
m_1+d_i-m_{k_i}\leq\frac{3d_i}{(\log n)^2}.
\ene
Now it remains to bound $k_i$. From \eqref{e-ulb} we have
$$(k_i-1)\times\frac{\textnormal{Area}_C(D_i)}{(\log n)^2}\leq\sum\limits_{j=1}^{k_i-1} \textnormal{Area}_C^i(m_j,m_{j+1})\leq \textnormal{Area}_C(D_i),$$
which implies that for $n$ large enough,
\be
k_i\leq (\log n)^2+1<2(\log n)^2.
\ene

The proof is complete.
\end{proof}

\begin{rem*}
The ratio $\frac{d_i}{(\log n)^2}$ is important when we estimate the variance in the proof of Lemma \ref{l-exp-2} in the next section. And it is not hard to see that $k_i\geq \frac{(\log n)^2}{2}$. So $k_i$ is uniformly comparable to $(\log n)^2$. In this paper we will only apply the upper bound as shown in Lemma \ref{l-exp-2}.
\end{rem*}

\section{Proof of Theorem \ref{mt-i}}\label{sec-proof}
In this section, we finish the proof of Theorem \ref{mt-i}. First we always assume $(\Gamma,\mathcal{O})\in\mathcal{F}_n^\star(c)$  that is defined in \eqref{def-Fnc}. For simplicity of notation, we denote 
$$D_{ij}=D(\eta_{i}^{m_j},\eta_i^{m_{j+1}}) \text{ for }1\leq j\leq k_i-1$$
and 
$$D_{ik_i}=D(\eta_i^{m_{k_i}},\eta_i^{m_1}).$$

From Lemma \ref{l-decom}, we have the following decomposition of $S^C=S^C(\Gamma,\mathcal{O})$:
$$S^{C}=\bigcup\limits_{i\in\mathcal{I}_1}
\left(\bigcup\limits_{j=1}^{k_i}D_{ij}\right)\bigcup\left(\bigcup
\limits_{i\in\mathcal{I}_2}D_i\right)\bigcup\{\text{$2n$ small triangles}\},$$
 where each small triangle is enclosed by three short horocycle segments (see Figure \ref{fig:11} for an illustration).
\begin{figure}[ht]
\centering
\includegraphics[width=0.5\textwidth]{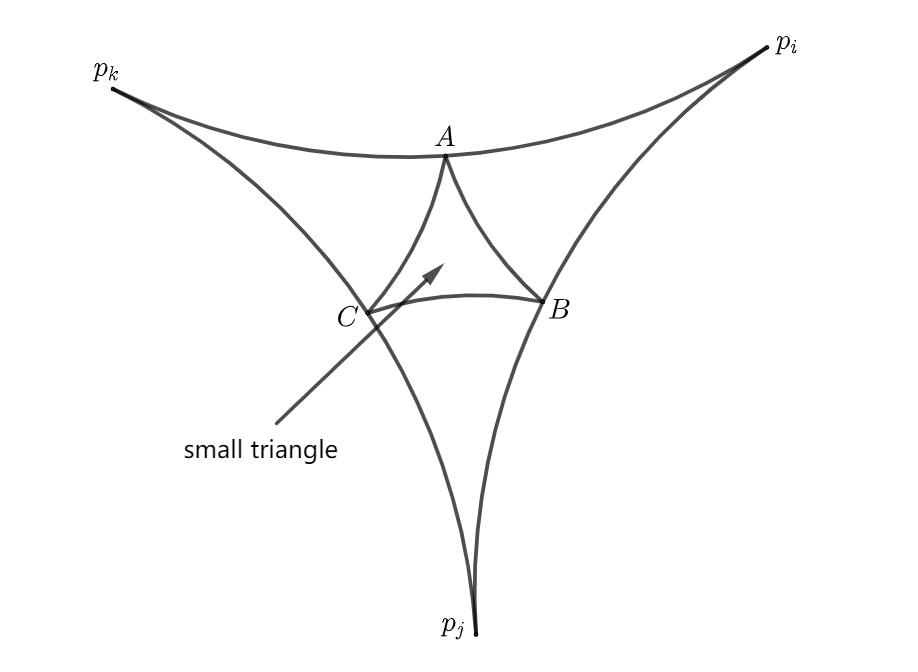}
\caption{A small triangle}
\label{fig:11}
\end{figure}

Denote
$$\mathfrak{S}^C=\{D_{ij};\ i\in\mathcal{I}_1,1\leq j\leq k_i\}\bigcup
\{D_i;\ i\in\mathcal{I}_2\}\bigcup\{\text{$2n$ small triangles}\}.$$
 For each short horocycle segment $\gamma$, it uniquely determines a domain $\Omega_{\gamma}$ amongst $$\mathfrak{S}_0^C=\{D_{ij};\ i\in\mathcal{I}_1,1\leq j\leq k_i\}\bigcup
\{D_i;\ i\in\mathcal{I}_2\}$$ such that $\gamma$ is contained in the boundary of $\Omega_{\gamma}$. Let $\Delta$ be a small triangle enclosed by three short horocycle segments $\{\gamma_i\}_{i=1}^3$. We say that $\{\Omega_{\gamma_i}\}_{i=1}^3$ are \emph{sector domains} of $\Delta$. We remark here that $\Omega_{\gamma_1}$ may be the same as $\Omega_{\gamma_2}$ for $\gamma_1\neq \gamma_2$.


Take two symbols $\mathcal{A}$ and $\mathcal{B}$, and let $J:\mathfrak{S}^C\to\{\mathcal{A},\mathcal{B}\}$ be a mapping.  Since $$J(\Omega_{\gamma_1}),  J(\Omega_{\gamma_2}),  J(\Omega_{\gamma_3}) \in \{\mathcal{A},\mathcal{B}\},$$
it follows that amongst $\{\Omega_{\gamma_i}\}_{i=1}^3$, at least two of them have the same image $I\left(\Delta\right)\in \{\mathcal{A},\mathcal{B}\}$ under the mapping $J$. We say the mapping $J$ is \emph{compatible} on $\mathfrak{S}^C$ if
$$J(\Delta)=I(\Delta)$$
for every small triangle $\Delta$.
\begin{figure}[ht]
\centering
\includegraphics[width=0.8\textwidth]{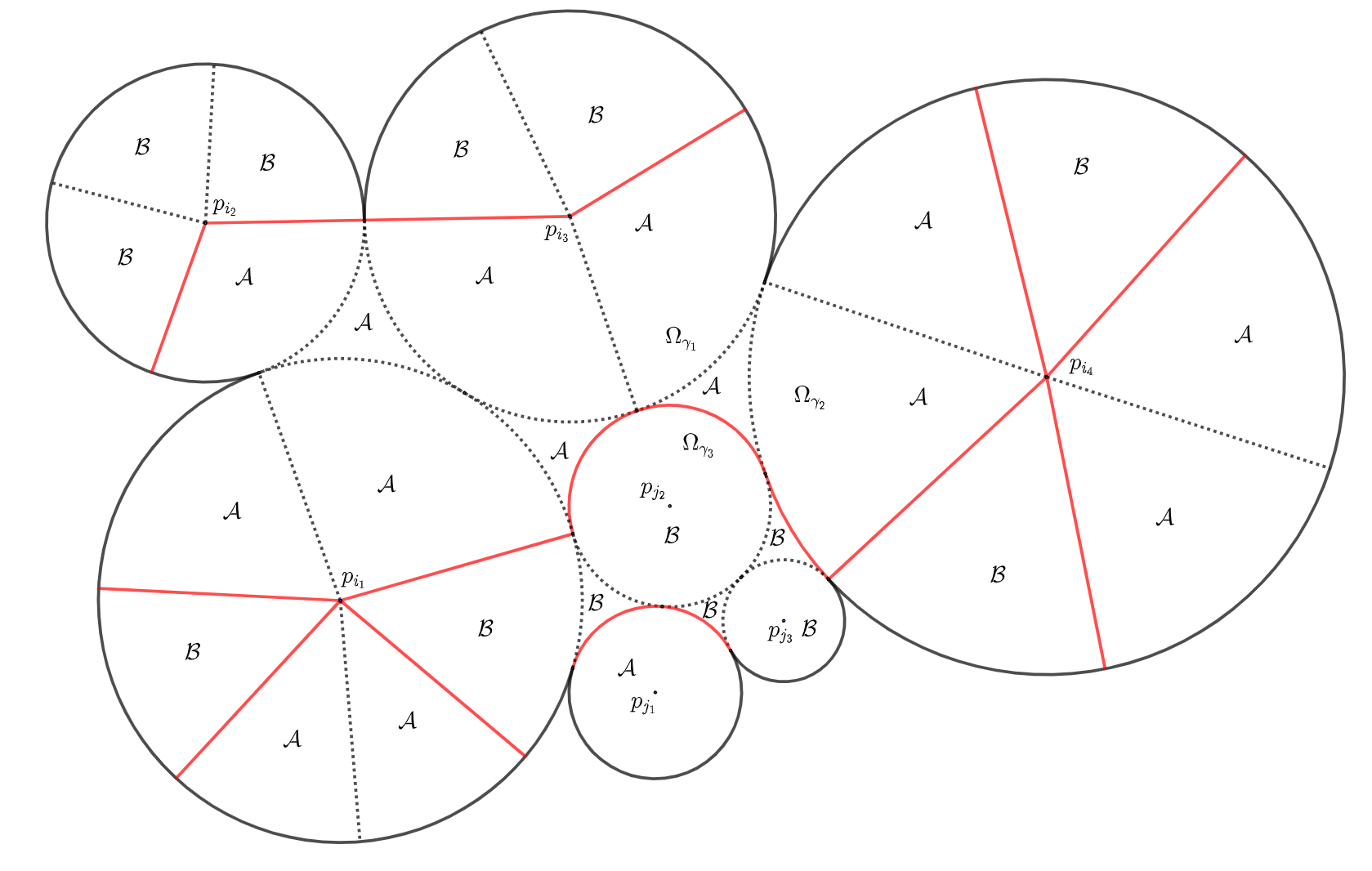}
\caption{An example for a compatible mapping $J$}
\label{fig:15}
\end{figure}
\begin{rem*}
See Figure \ref{fig:15}, for examples:
\begin{enumerate}[(a)]
\item if $J(\Omega_{\gamma_1})=J(\Omega_{\gamma_2})=\mathcal{A},J(\Omega_{\gamma_3})=\mathcal{B}$, then we have $J(\Delta)=\mathcal{A}.$ In this case both the short horocycle segments $\gamma_1$ and $\gamma_2$ are not contained in the boundary of $\mathcal{A}(J)$ and $\mathcal{B}(J)$ because they are interior points of $\mathcal{A}$.
\item If $J(\Omega_{\gamma_1})=J(\Omega_{\gamma_2})=J(\Omega_{\gamma_3})=\mathcal{A}$, then we have $J(\Delta)=\mathcal{A}$. In this case all the short horocycle segments $\{\gamma_i\}_{i=1}^3$ are not contained in the boundary of $\mathcal{A}(J)$ and $\mathcal{B}(J)$ for same reason above.
\end{enumerate}
\end{rem*}

Define
 $$X_n\overset{\text{def}}{=}\left\{\text{all compatible mappings }J:\mathfrak{S}^C\to\{\mathcal{A},\mathcal{B}\}\right\}.$$
 It is clear that $X_n$ is a finite set and there exists a uniform probability measure $\mu$ on it. For any $J\in X_n$, it will induce a division $(\mathcal{A}(J),\mathcal{B}(J))$ of $S^C$, where
\be \label{def=divi}
\mathcal{A}(J)\overset{\text{def}}{=}\bigcup\limits_{\Omega\in\mathfrak{S}^C,\ J(\Omega)=\mathcal{A}}\Omega\text{\ and \ }\mathcal{B}(J)\overset{\text{def}}{=}\bigcup\limits_{\Omega\in\mathfrak{S}^C,\ J(\Omega)=\mathcal{B}}\Omega.
\ene
It is clear that the two boundaries coincide, i.e., $\partial \mathcal{A}(J)=\partial \mathcal{B}(J)$.
\begin{remark}\label{r-bound}
For a compatible mapping $J$, it is clear that for any small triangle $\Delta$, at most one of its horocycle segment boundaries is contained in $\partial\mathcal{A}(J)=\partial\mathcal{B}(J)$.
\end{remark}
For any property $P$, denote by $\mathbb{P}\left(P\right)$ the probability that a random element $J\in (X_n,\mu)$ satisfies property $P$, i.e., 
\[\mathbb{P}\left(P\right)=\mu\left(\{J\in X_n; \ J \textit{ satisfies property $P$} \}\right).\]
Here we emphasis again that the randomness is only on $J$ and the surface $(\Gamma,\mathcal{O})\in\mathcal{F}_n^\star(c)$ is fixed.
For any random variable $Y$ on $(X_n,\mu)$, denote by $\mathbb{E}\left[Y\right]$ and $\textnormal{Var}\left(Y\right)$ the expected value and variance of $Y$ respectively. Now we consider the following three random variables:
\begin{enumerate}
\item $|\partial\mathcal{A}_n|(J)=\ell_C(\partial\mathcal{A}(J))$;
\item $|\mathcal{A}_n|(J)=\textnormal{Area}_C(\mathcal{A}(J))$;
\item $|\mathcal{MA}_n|(J)=\min\left\{\textnormal{Area}_C(\mathcal{A}(J)), \
    \textnormal{Area}_C(\mathcal{B}(J))\right\}$.
\end{enumerate}
We will bound the expected values of $|\partial\mathcal{A}_n|(J)$ and $|\mathcal{MA}_n|(J)$. The first one is
\begin{lemma}\label{l-exp-1}
For $n$ large enough, we have
$$\mathbb{E}\left[|\partial\mathcal{A}_n|\right]\leq \frac{3n}{2}+4c\cdot(\log n)^4.$$
\end{lemma}

\begin{proof}
For each element $J\in X_n$, the boundary $\partial\mathcal{A}(J)$ consists of two parts: the simple curves $\eta_{i}^{m_j}\subset D_i \ (i\in\mathcal{I}_1,\ 1\leq j\leq k_i)$ defined in \eqref{eta-i} and certain short horocycle segments. From Lemma \ref{l-decom}, we have that for any $i\in\mathcal{I}_1$ and $1\leq j\leq k_i$,
 \begin{align}\label{e-bound}
 \ell_C(\eta_{i}^{m_j})\leq 2\log n\text{ \ and \ }k_i\leq 2(\log n)^2.
 \end{align}
 Since $|\mathcal{I}_1|\leq |\mathcal{I}|\leq c\cdot\log n$, by \eqref{e-bound} we may conclude that the first part has total length $\leq 4c\cdot(\log n)^4$. Now we estimate the total length of the second part: for any small triangle $\Delta$ enclosed by three short horocycle segments $\{\gamma_i\}_{i=1}^3$, we denote
$$\mathbb{P}(\Delta)=\mathbb{P}\left(J\in X_n;\ \gamma_i\nsubseteq\partial\mathcal{A}(J)\text{ for all }i=1,2,3\right).$$
If $\Omega_{\gamma_1}=\Omega_{\gamma_2}=\Omega_{\gamma_3}$, then $\mathbb{P}(\Delta)=1$. If exactly two of $\{\Omega_{\gamma_i}\}_{i=1}^3$ are the same, then $\mathbb{P}(\Delta)=\frac{1}{2}$. If $\{\Omega_{\gamma_i}\}_{i=1}^3$ are pairwise distinct, then $\mathbb{P}(\Delta)=\frac{1}{4}$. To summarize, we have that for any small triangle $\Delta$,
\begin{align*}
\mathbb{P}(\Delta)\geq\frac{1}{4}.
\end{align*}
It follows that 
\begin{align}\label{e-pbound}
\mathbb{P}\left(J\in X_n;\ \ell_C(\partial\Delta\cap\partial\mathcal{A}(J))=0\right)\geq \frac{1}{4}.
\end{align}
Recall that for any short horocycle segment $\gamma$,
$$\ell_{C}(\gamma)\leq 1.$$
Combined with Remark \ref{r-bound}, we have that for any small triangle $\Delta$,
\begin{align}\label{n-bound}\ell_C(\partial\Delta\cap\partial\mathcal{A}(J))\leq 1.\end{align}
 Since there are $2n$ small triangles, together with \eqref{e-pbound}, \eqref{n-bound} and the property that the first part has total length $\leq 4c\cdot(\log n)^4$, we obtain
$$\mathbb{E}\left[|\partial\mathcal{A}_n|\right]\leq 2n\times\frac{3}{4}+4c\cdot(\log n)^4=\frac{3n}{2}+4c\cdot(\log n)^4$$ as desired.
\end{proof}

Set
$$L=\bigcup\limits_{i\in\mathcal{I}_1}\left(\bigcup\limits_{j=1}^{k_i}\eta_{i}^{m_j}\right)
\bigcup\{\text{$6n$ short horocycle segments}\},$$
where $\{\eta_{i}^{m_j}\}_{i\in\mathcal{I}_1,\ 1\leq j\leq k_i}$, defined in \eqref{eta-i}, are the simple curves in Lemma \ref{l-decom}. It is clear that $\nu(L)=0$ where $\nu$ is the measure induced from the hyperbolic metric on $S^C$. For any subset $\Omega\subset S^C$, the characteristic function $1_{\Omega}: S^C\to\mathbb{R}$ is defined by $$1_{\Omega}(x)=\begin{cases}1\text{\quad if }x\in\Omega;\\0\text{\quad if }x\notin\Omega.\end{cases}$$
Then the expected value satisfies
\begin{align*}
\mathbb{E}\left[|\mathcal{A}_n|\right]&=\int\limits_{X_n}\textnormal{Area}_C
\left(\mathcal{A}(J)\right)d\mu(J)=\int\limits_{X_n}\int\limits_{S^C}1_{\mathcal{A}(J)}(x)d\nu(x)d\mu(J)\\
&=\int\limits_{S^C}\int\limits_{X_n}1_{\mathcal{A}(J)}(x)d\mu(J)d\nu(x)=\int\limits_{S^C}\mathbb{P}\left(J\in X_n;\ x\in\mathcal{A}(J)\right)d\nu(x)\\
&=\int\limits_{S^C\setminus L}\mathbb{P}\left(J\in X_n;\ x\in\mathcal{A}(J)\right)d\nu(x)=\frac{1}{2}\textnormal{Area}_C\left(S^C\right),
\end{align*}
where the last equality holds since for any $x\in S^C\setminus L$, $$\mathbb{P}\left(J\in X_n;\ x\in\mathcal{A}(J)\right)=\frac{1}{2}.$$ Now we calculate $\mathbb{E}\left[|\mathcal{A}_n|^2\right]$. By definition we have
\begin{align*}
\mathbb{E}\left[|\mathcal{A}_n|^2\right]&=\int\limits_{X_n}\textnormal{Area}_C
\left(\mathcal{A}(J)\right)^2d\mu(J)\\
&=\int\limits_{X_n}\int\limits_{S^C\times S^C}1_{\mathcal{A}(J)}(x)\cdot 1_{\mathcal{A}(J)}(y)d\nu(x)d\nu(y)d\mu(J)\\
&=\int\limits_{S^C\times S^C}\mathbb{P}\left(J\in X_n;\ x,y\in\mathcal{A}(J)\right)d\nu(x)d\nu(y)\\
&=\int\limits_{(S^C\setminus L)\times (S^C\setminus L)}\mathbb{P}\left(J\in X_n;\ x,y\in\mathcal{A}(J)\right)d\nu(x)d\nu(y).
\end{align*}
Thus, the variance satisfies 
\begin{align}\label{e-var}
\textnormal{Var}\left(|\mathcal{A}_n|\right)&=\mathbb{E}\left[|\mathcal{A}_n|^2\right]-\mathbb{E}\left[|\mathcal{A}_n|\right]^2\\
&=\int\limits_{(S^C\setminus L)\times (S^C\setminus L)}\left(\mathbb{P}\left(J\in X_n;\ x,y\in\mathcal{A}(J)\right)-\frac{1}{4}\right)d\nu(x)d\nu(y).\nonumber
\end{align}

\noindent Recall that the Chebyshev inequality says that for any $t>0$ and random variable $Y$ with expected value $\mathbb{E}[Y]$ and variance $\textnormal{Var}(Y)$, then
$$\mathbb{P}\left(|Y-\mathbb{E}[Y]|\geq t\right)\leq\frac{\textnormal{Var}(Y)}{t^2}.$$
Now we apply the Chebyshev inequality to the case that $Y=|\mathcal{A}_n|$ and $t=\delta\cdot\textnormal{Area}_C\left(S^C\right)$ where $\delta \in (0,\frac{1}{2})$ is arbitrary. Since $\mathbb{E}\left[|\mathcal{A}_n|\right]=\frac{1}{2}\textnormal{Area}_C\left(S^C\right)$, we have
\be\label{p-by-var}
 \mathbb{P}\left(J\in X_n;\ \left||\mathcal{A}_n(J)|-\frac{1}{2}\textnormal{Area}_C\left(S^C\right)\right|\geq \delta\cdot\textnormal{Area}_C\left(S^C\right)\right) \leq\frac{\textnormal{Var}\left(|\mathcal{A}_n|\right)}{\delta^2\left(\textnormal{Area}_C\left(S^C\right)\right)^2}.
\ene
The following lemma is motivated by \cite[Lemma 1]{BCP22}.
\begin{lemma}\label{l-exp-2}
For any $\delta>0$ and $n$ large enough, we have
$$\mathbb{P}\left(J\in X_n;\ \left|\frac{|\mathcal{A}_n|(J)}{\textnormal{Area}_C\left(S^C\right)}
-\frac{1}{2}\right|\geq \delta\right)\leq\delta.$$
\end{lemma}
\begin{proof}
By \eqref{p-by-var} it suffices to show that for $n$ large enough, 
\be \label{d-2-d-1}
\frac{\textnormal{Var}\left(|\mathcal{A}_n|\right)}{\delta^2\left(\textnormal{Area}_C\left(S^C\right)\right)^2}\leq \delta.
\ene

\noindent Recall that 
\[S^C=\mathfrak{S}_0^C \bigcup\{\text{$2n$ small triangles}\}\] where $$\mathfrak{S}_0^C=\{D_{ij};\ i\in\mathcal{I}_1,1\leq j\leq k_i\}\cup
\{D_i;\ i\in\mathcal{I}_2\}.$$
Now we split the product $(S^C\setminus L)\times (S^C\setminus L)$ as the following four parts:
\begin{align*}
U_1\overset{\text{def}}{=}\left\{(x,y)\in (S^C\setminus L)\times (S^C\setminus L);\begin{matrix} &\text{there exists a small triangle } \Delta\\ &\text{ with a sector domain }\Omega\text{ such that }\\ & \text{either }x\in\Delta,y\in\Omega\text{ or }y\in\Delta,x\in\Omega\end{matrix}\right\},
\end{align*}
\begin{align*}
U_2\overset{\text{def}}{=}\left\{(x,y)\in (S^C\setminus L)\times (S^C\setminus L);\begin{matrix}&\text{there exist two small triangles}\\ &\Delta_1\text{ and }\Delta_2\text{ such that $x\in\Delta_1,\ y\in\Delta_2$ and}\\ & \text{$\Delta_1$ and $\Delta_2$ share at least one common}\\&\text{ sector domain}\end{matrix}\right\},
\end{align*}
\begin{align*}
U_3\overset{\text{def}}{=}\left\{(x,y)\in (S^C\setminus L)\times (S^C\setminus L);\ x,y\in\Omega\text{ for some }\Omega\in\mathfrak{S}_0^C\right\},
\end{align*}
and
$$U_4\overset{\text{def}}{=}\left((S^C\setminus L)\times (S^C\setminus L)\right)\setminus\left(\bigcup\limits_{m=1}^3 U_m\right).$$
It is clear that
\[(S^C\setminus L)\times (S^C\setminus L)=\bigcup_{m=1}^4 U_m.\]

\noindent For any $(x,y)\in U_4\subset (S^C\setminus L)\times (S^C\setminus L)$, there are four cases:
\begin{enumerate}
\item $x\in\Omega_1,\ y\in\Omega_2$ for some $\Omega_1\neq\Omega_2\in\mathfrak{S}_0^C$;
\item $x\in\Omega,\ y\in\Delta$ for some $\Omega\in\mathfrak{S}_0^C$ and small triangle $\Delta$ such that $\Omega$ is not a sector domain of $\Delta$;
\item $x\in\Delta,\ y\in\Omega$ for some $\Omega\in\mathfrak{S}_0^C$ and small triangle $\Delta$ such that $\Omega$ is not a sector domain of $\Delta$;
\item $x\in\Delta_1,\ y\in\Delta_2$ for two small triangles $\Delta_1$ and $\Delta_2$ which do not share any common sector domain.
\end{enumerate}
For all the four cases of $U_4$ above, the two events for $x$ and $y$ are independent. So we have that for any $(x,y)\in U_4$,
\begin{align}\label{e-prob}
\mathbb{P}\left(J\in X_n;\ x,y\in\mathcal{A}(J)\right)=\mathbb{P}\left(J\in X_n;\ x\in\mathcal{A}(J)\right)\cdot \mathbb{P}\left(J\in X_n;\ y\in\mathcal{A}(J)\right)=\frac{1}{4}.
\end{align}
Thus, to prove \eqref{d-2-d-1}, from \eqref{e-var} it suffices to show that for $n$ large enough,
\be\label{e-b-vol}
(\nu\times\nu)\left(\bigcup\limits_{m=1}^3 U_m\right)\leq \delta^3\left(\textnormal{Area}_C\left(S^C\right)\right)^2
\ene
where $\nu\times \nu$ is the product measure on $S^C\times S^C$. The proof is split into the following three sublemmas.
\begin{subl}\label{sl-1} For $n$ large enough, we have
\[(\nu\times\nu)(U_1)\leq \frac{12n^2}{(\log n)^2}.\]
\end{subl}
\begin{proof}
For each small triangle $\Delta$, it follows by Lemma \ref{l-sch} that
\begin{align}\label{area-tri}
\textnormal{Area}_C(\Delta)\leq\textnormal{Area}_O(\Delta)=\pi-3<\frac{1}{6}.
\end{align}
From \eqref{e-sum} and Lemma \ref{l-decom} we know that for any $i\in\mathcal{I}_1$ and $1\leq j\leq k_i$, $$\textnormal{Area}_C\left(D_{ij}\right)\leq\frac{2d_i}{(\log n)^2}\leq\frac{12n}{(\log n)^2}.$$
By definition of $\mathcal{I}_2$ we also have that for any $i\in\mathcal{I}_2$,
$$\textnormal{Area}_C\left(D_i\right)\leq d_i\leq\frac{n}{(\log n)^2}.$$
In summary, for any $\Omega\in\mathfrak{S}_0^C$,
\begin{align}\label{area-do}
\textnormal{Area}_C(\Omega)\leq \frac{12n}{(\log n)^2}.
\end{align}
Since there are $2n$ small triangle and each small triangle has at most three sector domains, together with \eqref{area-tri} and \eqref{area-do}, we have
\begin{align}\label{e-area-1}
(\nu\times\nu)\left(U_1\right)&=
\sum\limits_{(\Delta,\Omega)}\textnormal{Area}_C(\Delta)\times
\textnormal{Area}_C(\Omega) \nonumber\\
&\leq2n\times 3\times\frac{1}{6}\times\frac{12n}{(\log n)^2}=\frac{12n^2}{(\log n)^2},\nonumber
\end{align}
where $(\Delta,\Omega)$ runs over all pairs of small triangle $\Delta$ and $\Omega\in\mathfrak{S}_0^C$ such that $\Omega$ is a sector domain of $\Delta$.
\end{proof} 

\begin{subl}\label{sl-2} For $n$ large enough, we have
\[(\nu\times\nu)(U_2)\leq \frac{3n^2}{(\log n)^2}.\]
\end{subl}
\begin{proof}
 For any $i\in\mathcal{I}_1$ and $1\leq j\leq k_i$, from Lemma \ref{l-decom} we know that the boundary $\partial D_{ij}$ contains at most $\frac{3d_i}{(\log n)^2}\leq\frac{18n}{(\log n)^2}$ short horocycle segments. For any $i\in\mathcal{I}_2$, the boundary $\partial D_i$ contains at most $d_i\leq\frac{n}{(\log n)^2}$ short horocycle segments. In summary, we deduce that for each $\Omega\in\mathfrak{S}_0^C$, it has at most $\frac{18n}{(\log n)^2}$ small triangles such that $\Omega$ is contained in the set of sector domains of each small triangle. This implies that for each small triangle $\Delta$, there are at most $\frac{54n}{(\log n)^2}$ small triangles which share at least one common sector domain with $\Delta$. Thus we have
\begin{align}\label{e-area-2}
(\nu\times\nu)(U_2)&=\sum\limits_{(\Delta_1,\Delta_2)}\textnormal{Area}_C(\Delta_1)\times
\textnormal{Area}_C(\Delta_2) \nonumber\\
&\leq2n\times\frac{54n}{(\log n)^2}\times\frac{1}{6}\times\frac{1}{6} =\frac{3n^2}{(\log n)^2},\nonumber
\end{align}
where $(\Delta_1,\Delta_2)$ runs over all pairs of small triangles sharing at least one common sector domain.
\end{proof}

\begin{subl}\label{sl-3} For $n$ large enough, we have
\[(\nu\times\nu)(U_3)\leq \frac{432c\cdot n^2}{\log n}.\]
\end{subl}
\begin{proof}
Since $\max\{|\mathcal{I}_1|,|\mathcal{I}_2|\}\leq |\mathcal{I}|\leq c\cdot\log n$, it follows by Lemma \ref{l-decom} that
\begin{align*}
\left|\mathfrak{S}_0^C\right|&=\sum\limits_{i\in\mathcal{I}_1}k_i+|\mathcal{I}_2|\\
&\leq 2(\log n)^2\times |\mathcal{I}_1|+c\cdot\log n\\
&\leq 3c\cdot(\log n)^3.
\end{align*}
Then combining with \eqref{area-do} we obtain 
\begin{align*}
(\nu\times\nu)(U_3)&=\sum\limits_{\Omega\in\mathfrak{S}_0^C}\textnormal{Area}_C(\Omega)^2\\
&\leq 3c\cdot(\log n)^3\times\left(\frac{12n}{(\log n)^2}\right)^2\\
&=\frac{432c\cdot n^2}{\log n}
\end{align*}
as desired.
\end{proof}

\noindent Now we return to prove \eqref{e-b-vol}. By \eqref{e-coarea} we know that 
  \begin{align}\label{e-coarea-1}
  \textnormal{Area}_C\left(S^C\right)\geq 2\pi(n-c\cdot\log n).
  \end{align}
Then it follows by the three sublemmas above that Equation \eqref{e-b-vol} clearly holds for large enough $n$. The proof is complete. 
\end{proof}

As a direct consequence of Lemma \ref{l-exp-2},
\begin{lemma}\label{l-exp-3}
For any $0<\delta<\frac{1}{2}$ and $n$ large enough,
$$\mathbb{E}\left[\left|\mathcal{MA}_n\right|\right]\geq \pi(1-2\delta)^2\cdot(n-c\cdot\log n).$$
\end{lemma}
\begin{proof}
From Lemma \ref{l-exp-2} we know that for any $0<\delta<\frac{1}{2}$ and $n$ large enough, 
$$\mathbb{P}\left(J\in X_n;\ \left|\mathcal{MA}_n\right|(J)> \left(\frac{1}{2}-\delta\right)\textnormal{Area}_C\left(S^C\right)\right)> 1-2\delta,$$
which implies
$$\mathbb{E}\left[\left|\mathcal{MA}_n\right|\right]\geq \frac{1}{2}(1-2\delta)^2\textnormal{Area}_C\left(S^C\right).$$

Then the conclusion follows by \eqref{e-coarea-1}.
\end{proof}

Now we are ready to prove Theorem \ref{mt-i}.

\begin{proof}[Proof of Theorem \ref{mt-i}]
For any $c>0$ and let $(\Gamma,\mathcal{O})\in\mathcal{F}^\star_n(c)$ be arbitrary. Then from Lemma \ref{l-exp-1} and Lemma \ref{l-exp-3} we have that for any $0<\delta<\frac{1}{2}$,
\begin{align*}
\limsup\limits_{n\to\infty}h\left(S^C\left(\Gamma,\mathcal{O}\right)\right)&\leq  \limsup\limits_{n\to\infty}\min_{J\in X_n}\frac{|\partial \mathcal{A}_n|(J)}{|\mathcal{MA}_n|(J)}\\
&\leq \limsup\limits_{n\to\infty}\frac{\mathbb{E}\left[\left|\partial\mathcal{A}_n\right|\right]}
{\mathbb{E}\left[\left|\mathcal{MA}_n\right|\right]}\\
&\leq\limsup\limits_{n\to\infty}\frac{\frac{3n}{2}+4c\cdot(\log n)^4}{(1-2\delta)^2\pi(n-c\cdot\log n)}\\
&=\frac{1}{(1-2\delta)^2}\cdot\frac{3}{2\pi}.
\end{align*}
Recall that \eqref{e-0} says that for $n$ large enough,
$$\textnormal{Prob}_n\left\{(\Gamma,\mathcal{O})\in\mathcal{F}_n^\star(c)\right\}\geq 1-\frac{2}{c}.$$

 Then the conclusion follows by letting $\delta\to 0$ and $c\to\infty$.
\end{proof}

\begin{rem*}
In the proof of Theorem \ref{mt-i}, a small triangle in $S^O$ is enclosed by three short horocycle segments each of which has length equal to $1$. If we replace each small triangle by a geodesic triangle enclosed by three geodesic segments of lengths equal to $2\log\left( \frac{1+\sqrt{5}}{2}\right)\sim 0.962$, then the proof of Theorem \ref{mt-i} actually can improve $\frac{3}{2\pi}$ to $\frac{3 }{\pi}\log \left( \frac{1+\sqrt{5}}{2}\right)$ in Theorem \ref{mt-i}. We are grateful to one referee for pointing out it to us. 
\end{rem*}

We enclose this work by the following direct consequence. First recall that
\begin{thm*} $($\cite[Theorem 4.1]{MR1677565}$)$
For $l$ sufficiently large, there is a constant $C(l)>0$ only depending on $l$ such that if $S^O$ is a punctured Riemann surface with cusps of length $\geq l$, then
$$\frac{1}{C(l)}h(S^O)\leq h(S^C)\leq C(l)h(S^O)$$ where $S^C$ is the conformal compactification of $S^O$.

Furthermore, $C(l)\to 1$ as $l\to\infty$.
\end{thm*}

\noindent As in \cite{MR1677565} we know that $l(\epsilon)\to\infty$ as $\epsilon\to 0$. So combining the theorem above, Theorem \ref{t-2} and Theorem \ref{mt-i} we also have
\begin{corollary}
Let $(\Gamma,\mathcal{O})$ be a random element of $\mathcal{F}^\star_n$. Then for any $\epsilon>0$,
$$\lim \limits_{n\to \infty}\textnormal{Prob}_n\left\{(\Gamma,\mathcal{O})\in \mathcal{F}^\star_n; \ h\left(S^O(\Gamma,\mathcal{O})\right)<\frac{3}{2\pi}+\epsilon\right\}=1.$$
\end{corollary}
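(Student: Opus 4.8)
The plan is to deduce the corollary directly from Theorem~\ref{mt-i}, combined with the comparison between $h(S^O)$ and $h(S^C)$ recalled just above (\cite[Theorem 4.1]{MR1677565}) and the large-cusps statement Theorem~\ref{t-2}. Fix $\epsilon>0$. Since the constant $C(l)$ in the comparison theorem satisfies $C(l)\to 1$ as $l\to\infty$, I would first pick $\delta\in(0,\epsilon)$ so small that $(1+\delta)\left(\frac{2}{3}+\delta\right)<\frac{2}{3}+\epsilon$, and then choose $l>0$ large enough (in particular larger than the threshold ``$l$ sufficiently large'' required by \cite[Theorem 4.1]{MR1677565}) so that $C(l)\le 1+\delta$.

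Next I would introduce the two ``good'' events
$$A_n=\{(\Gamma,\mathcal{O})\in\mathcal{F}^\star_n;\ S^O(\Gamma,\mathcal{O})\text{ has cusps of length}\ge l\},\qquad B_n=\left\{(\Gamma,\mathcal{O})\in\mathcal{F}^\star_n;\ h(S^C(\Gamma,\mathcal{O}))<\tfrac{2}{3}+\delta\right\}.$$
By Theorem~\ref{t-2} we have $\textnormal{Prob}_n(A_n)\to 1$ as $n\to\infty$, and by Theorem~\ref{mt-i} applied with $\delta$ in place of $\epsilon$ we have $\textnormal{Prob}_n(B_n)\to 1$; hence $\textnormal{Prob}_n(A_n\cap B_n)\to 1$. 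On the event $A_n\cap B_n$ the surface $S^O(\Gamma,\mathcal{O})$ has cusps of length $\ge l$, so the comparison theorem gives
$$h(S^O(\Gamma,\mathcal{O}))\le C(l)\,h(S^C(\Gamma,\mathcal{O}))\le (1+\delta)\left(\tfrac{2}{3}+\delta\right)<\tfrac{2}{3}+\epsilon.$$
Therefore $\textnormal{Prob}_n\left\{h(S^O(\Gamma,\mathcal{O}))<\frac{2}{3}+\epsilon\right\}\ge\textnormal{Prob}_n(A_n\cap B_n)\to 1$, which is exactly the claimed conclusion.

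There is no genuine obstacle here: the argument is just bookkeeping of three known facts. The only points that require care are the order of quantifiers — namely that $l$ (and hence $\delta$) must be fixed \emph{before} letting $n\to\infty$, which is legitimate since Theorem~\ref{t-2} holds for every fixed $l>0$ and Theorem~\ref{mt-i} holds for every fixed positive parameter — and checking that the chosen $l$ is large enough both for the ``$C(l)$ close to $1$'' regime of \cite[Theorem 4.1]{MR1677565} and for the hypothesis ``cusps of length $\ge l$'' to be meaningful; both are arranged simultaneously by enlarging $l$ and shrinking $\delta$.
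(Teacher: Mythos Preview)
Your argument is correct and is exactly the approach the paper takes: the corollary is stated as an immediate consequence of Theorem~\ref{mt-i}, Theorem~\ref{t-2}, and the Brooks comparison theorem \cite[Theorem 4.1]{MR1677565}, using that $C(l)\to 1$ as $l\to\infty$. Your write-up simply makes the $\delta$/$l$ bookkeeping explicit, which the paper leaves implicit in a one-line ``combine'' statement.
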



\bibliographystyle{plain}
\bibliography{ref}
\end{document}